\numberwithin{equation}{section}
\numberwithin{figure}{section}
\theoremstyle{plain}
\newtheorem{thm}{\protect\theoremname}[section]
  \theoremstyle{definition}
  \newtheorem{defn}[thm]{\protect\definitionname}
  \theoremstyle{plain}
  \newtheorem{prop}[thm]{\protect\propositionname}
  \theoremstyle{plain}
  \newtheorem{lem}[thm]{\protect\lemmaname}
\DeclareMathAlphabet{\mathcal}{OMS}{cmsy}{m}{n}
  \providecommand{\definitionname}{Definition}
  \providecommand{\lemmaname}{Lemma}
  \providecommand{\propositionname}{Proposition}
\providecommand{\theoremname}{Theorem}
\begin{document}
\global\long\def\Sgm{\boldsymbol{\Sigma}}

\global\long\def\W{\boldsymbol{W}}

\global\long\def\H{\mathscr{H}}

\global\long\def\P{\mathbb{P}}

\global\long\def\Q{\mathbb{Q}}

\title{Large deviation principle for fractional \\Brownian motion with
respect to capacity }

\author{Jiawei Li\thanks{Mathematical Institute, University of Oxford, Oxford OX2 6GG. Email:
\protect\href{mailto:jiawei.li@maths.ox.ac.uk}{jiawei.li@maths.ox.ac.uk}} \ and Zhongmin Qian\thanks{Mathematical Institute, University of Oxford, Oxford OX2 6GG. Email:
\protect\href{mailto:zhongmin.qian@maths.ox.ac.uk}{zhongmin.qian@maths.ox.ac.uk}}}
\maketitle
\begin{abstract}
We show that the fractional Brownian motion (fBM) defined via the Volterra
integral representation with Hurst parameter $H\geq\frac{1}{2}$ is
a quasi-surely defined Wiener functional on the classical Wiener space,
and we establish the large deviation principle (LDP) for such an fBM with
respect to $(p,r)$-capacity on the classical Wiener space in Malliavin's
sense.
\selectlanguage{english}%
\medskip

\emph{key words}: capacity, fractional Brownian motion, large deviations,
Malliavin derivative, quasi-sure analysis

\medskip

\emph{MSC classifications}: 60F10, 60H07
\end{abstract}

\section{Introduction}

Quasi-sure analysis, as a powerful tool to study functions on infinite dimensional spaces, was initiated by Malliavin  \cite{Malliavin1978a,Malliavin1978,Malliavin1997} using the stochastic calculus of variations and Fukushima \cite{Fukushima2010} by means of Dirichlet forms. The theory can be applied to plentiful aspects in stochastic analysis, such as Markov processes, Gaussian processes, large deviation principles  and etc., see e.g. \cite{Airault1991a,Boedihardjo2016,Denis1993,Fang1993,Fukushima1985,Gao2006, Gao2001,Huang1990,Kaneko1984,Liu2007,Ren1990,Ren1993,Yoshida1993} 
and the literature there-in. Malliavin observed that by constructing a regularity theory and a uniform measure on an abstract Wiener space, many interesting Wiener functionals are smooth, and this regularity theory enables us to study Wiener functionals like in the finite dimensional real analysis. An outer measure called $(p,r)$-capacity, denoted by $c_{p,r}$ throughout this paper, was introduced in terms of the Malliavin derivative and a number of papers concerning capacities have been published throughout last decades, see e.g. \cite{Fukushima1985,Kaneko1984,Kazumi1992a,Kazumi1992b,Malliavin1997,Malliavin1993a,Malliavin1993b}
and the literature there-in.

Among results related to quasi-sure analysis, the majority considers solutions to It{\^o}'s stochastic differential equations, which are merely measurable Wiener functionals. In fact, quasi-sure analysis can be used to handle processes which are  measurable functionals on the Wiener space but not solutions to any It{\^o}'s SDEs, such as fractional Brownian motions. Fractional Brownian motion (fBM), as an important example of Gaussian processes, has a variety of applications in mathematical finance, hydrodynamics, communication networks and so on, see e.g. \cite{Biagini2008, Decreusefond1999, Kolmogorov1940b, Mandelbrot1968,
Mishura2008}. To be more precise, an fBM $\left(B_t\right)_{t\geq 0}$ with Hurst parameter $H\in (0,1)$ is a centred self-similar Gaussian process, whose covariance function is given by 
\[\text{Cov}(s,t)=\frac{1}{2}\left(s^{2H}+t^{2H}-\left\vert t-s\right\vert^{2H}\right).
\]
By definition, one obtains immediately that when $H=\frac{1}{2}$, this process is a standard Brownian motion. However, when $H$ takes other values, this process differs from Brownian motion as its increments are no longer independent and thus exhibits long memory behaviour. Thanks to the Volterra integral representation introduced by Decreusefond and {\"U}st{\"u}nel \cite{Decreusefond1999}, which is 
\[
B_t=\int ^t _0 K(t,s)dW_s,\quad \forall t\geq 0,
\]
where $\left(W_t\right)_{t\geq 0}$ is a standard Brownian motion and $K$ is some singular kernel, fBMs can be regarded as measurable Wiener functionals. According to Malliavin, fBMs with different Hurst parameters induce a family of capacities living on distinct abstract Wiener spaces. Nonetheless, all these fBMs can be viewed as Wiener functionals on the classical Wiener space due to the integral representation, so that we can study them with one uniform measure - the capacity associated with Brownian motion. In this paper, we will prove that the integral representation of fBM is defined except for a capacity zero set when Hurst parameter $H\geq\frac{1}{2}$, that is, fBMs are quasi-surely defined Wiener functionals on the classical Wiener space.

In order to achieve this goal, we need several results from the rough paths theory. The analysis of rough paths, originated by Lyons (see e.g. \cite{Friz2014Course,Friz2010book,Lyons2002, Lyons2007Saint}), was established to study solutions to stochastic differential equations driven by semi-martingales and other rough signals. It turns out that many techniques developed in the rough paths analysis can be applied in the research of quasi-sure analysis as demonstrated in \cite{Boedihardjo2016}, as well as a series of work by various researchers (see e.g. \cite{inahama2006quasi,inahama2013laplace,inahama2015b,inahama2015a,Ledoux2002,millet2006large}
and etc.).

Besides proving that fBMs as Wiener functionals are quasi-surely defined, we establish large deviation principles for these Wiener functionals. 
Large deviations theory has been a prevalent topic in probability for its significance in statistics and statistical mechanics. It completes the central limit theorem by telling us that tail probabilities decay exponentially fast. In 1970s, the theory of large deviation principles (LDP for short) experienced rapid development due to the remarkable work by Donsker and Varadhan \cite{Donsker1974}, and one may refer to \cite{Dembo2009,Deuschel2001,Donsker1974,Varadhan1984} for further details. In finite dimensional case, one crucial result in this theory is the Cram{\'e}r's theorem, which precisely describes the rate of exponential decay. In infinite dimensional case, the exponential decay of large perturbations of Brownian motion from its mean trajectory is characterised in the Schilder's theorem, and the Freidlin-Wentzell theorem generalises it to the laws of It\^{o} diffusions, see e.g. \cite{Dembo2009} and \cite{Deuschel2001}. Similar results were proved using rough paths theory by Ledoux, Qian and Zhang in \cite{Ledoux2002}, see also \cite{friz2007large,millet2006large}. Moreover, the general Cram{\'e}r's theorem (see e.g. \cite{Deuschel2001}) can be used to study the large deviations of Gaussian measures. Indeed, large deviation principles can be formulated for not only measures, but also capacities. In \cite{Yoshida1993}, Yoshida established a version of LDP with respect to capacities on the abstract Wiener space, which implies the LDP for Gaussian measures, while in \cite{Gao2006} and \cite{Gao2001}, Gao and Ren considered the capacity version of Freidlin-Wentzell theorem. This line of research was taken a step further to the setting of Gaussian rough paths in \cite{Boedihardjo2016}. Inspired by the arguments in \cite{Boedihardjo2016}, we prove the LDP for fBMs with respect to $(p,r)$-capacity on the classical Wiener space.

The major difference between this paper and \cite{Yoshida1993} is that we use different capacities. Although the invariance property of capacities has been proved in \cite{Albeverio1992}, the capacities associated with different Gaussian measures are however non-comparable. Therefore, instead of using the capacities induced by fBMs, we treat fBMs with different Hurst parameters as a family of Wiener functionals and choose the Brownian motion capacity as a uniform measure.

This paper is organised in the following way. In next section, we present a few definitions and properties of capacities, and then we state the main result, a quasi-sure version of large deviation principle for fractional Brownian motions realised as Wiener functionals on the classical Wiener space. In section 3, we recall several elementary results such as Wiener chaos decomposition, exponential tightness, contraction principle in the context of quasi-sure analysis. Then in section 4, with a quite technical proof, we provide a construction of quasi-surely defined modifications of fBMs, which are considered to be Wiener functionals in our settings. Next, in section 5, we prove the exponential tightness of the family of finite dimensional approximations of fBMs (modified as in section 4). In this section, we adopt several ideas from the rough paths theory. Finally, we determine the rate function and complete our proof of the quasi-sure large deviation principles for fBMs in section 6. The key step is to obtain the finite dimensional quasi-sure large deviation principles, which may be accomplished by explicit computations.

\section{Preliminaries and the main result}

In this section, we will introduce basic definitions in Malliavin calculus and state the main result.

\subsection{Malliavin differentiation and capacities}

We mainly follow the notations used in Ikeda and Watanabe \cite{Ikeda2014}, and Nualart \cite{Nualart2006}. Although our presentation applies to multi-dimensional case as well, we only consider the one dimensional Wiener space here for simplicity. Let $\boldsymbol{W}$ be the space of all real-valued continuous paths over time interval $[0,1]$
starting from the origin, equipped with the uniform norm $\lVert \cdot \rVert$ given by $\lVert\omega\rVert=\sup_{t\in[0,1]}\vert\omega(t)\vert$, $\forall \omega\in \boldsymbol{W}$. The Borel $\sigma$-algebra is denoted by $\mathscr{B}(\boldsymbol{W})$. We call the functions which send each $\omega\in\boldsymbol{W}$ to its coordinates $\omega(t)$ (where $t\geq0$) the coordinate mapping processes, and are denoted by $\omega(t)$ or $\omega_{t}$. The Wiener measure $\P$ is the distribution of
standard Brownian motion, the unique probability on $\left(\boldsymbol{W},\mathscr{B}(\boldsymbol{W})\right)$
such that $\left\{ \omega(t):t\geq0\right\} $ is a standard Brownian
motion. Let $\mathscr{F}$ be the completion of $\mathscr{B}(\boldsymbol{W})$
under $\P$. The Wiener functionals, by convention in the literature, are
$\mathscr{F}$-measurable functions on $\W$. 

Let $\mathscr{H}$ denote the Cameron-Martin space, which is a
Hilbert space containing all absolutely continuous functions $h$ on $[0,1]$
such that $h(0)=0$ and its generalised derivative $\dot{h}$ is square
integrable. The inner product on $\mathscr{H}$ is given by
\[
\langle h,g\rangle_{\mathscr{H}}=\int_{0}^{1}\dot{h}(s)\dot{g}(s)ds,\quad \forall h,g\in \mathscr{H}.\]
This space may be embedded into $\boldsymbol{W}$ via a continuous
and dense embedding. Denote the topological dual of $\W$ by $\W^{*}$.
Then we have continuous dense embeddings $\W^{*}\hookrightarrow\mathscr{H}\hookrightarrow\W$.
The triple $(\W,\mathscr{H},\P)$ is called the classical Wiener
space.

There is a natural linear isometry from $\mathscr{H}$ to $L^{2}(\W)$ sending each element $h\in \mathscr{H}$ to a random variable $[h]$ such that $[h](\omega)=\int_{0}^{1}\dot{h}(s)d\omega(s)$ for all $\omega\in \W$, which is defined as an It\^o's
integral with respect to Brownian motion. Then $\mathbb{E}\left[[h][g]\right]=\langle h,g\rangle_{\mathscr{H}}$.
The family $\left\{ \left[h\right]:h\in\mathscr{H}\right\} $ is
called the Gaussian isometry process in \cite{Nualart2006} (see
Section 1.1.1, Chapter 1, \cite{Nualart2006}). 

A random variable $F$ on $\W$ is smooth if it is of the
form $F=f([h_{1}],\cdots,[h_{n}])$ for some $h_{1},\cdots h_{n}\in\mathscr{H}$
and $f\in C_{p}^{\infty}(\mathbb{R}^{n})$, a smooth function such
that $f$ and all of its partial derivatives have polynomial growth.
The collection of all smooth random variables is denoted by $\mathcal{S}$.
The Malliavin derivative $DF$ of $F$ is defined to be an $\mathscr{H}$-valued
random variable, given by
\[
DF=\sum_{i=1}^{n}\partial_{i}f([h_{1}],\cdots,[h_{n}])h_{i},
\]
where $\partial_{i}f$ denotes the partial derivative of $f$ in the
$i$-th component. The higher order derivatives, $D^{l}F$, $l\geq1$,
are defined inductively. The Sobolev norm $\lVert \cdot \rVert_{\mathbb{D}_{r}^{p}}$
of a smooth random variable $F$ is defined as
\[
\lVert F\rVert_{\mathbb{D}_{r}^{p}}=\left(\mathbb{E}\left[|F|^{p}\right]+\sum_{l=1}^{r}\mathbb{E}\left[\left|\lVert D^{l}F\rVert_{\mathscr{H}^{\otimes l}}\right|^{p}\right]\right)^{1/p},
\]
where $r=0,1,2,\cdots$ and $1\leq p<\infty$. The completion of $\mathcal{S}$
with respect to this norm is denoted by $\mathbb{D}_{r}^{p}$. 

The concept of capacities on the classical Wiener space plays a central role
in what follows. For given $p\geq1$ and $r=0,1,2,\cdots$, the capacity
$c_{p,r}$ is a sub-additive set function on the classical Wiener space, which
can be defined in two steps. 

First for every open $O\subset\W$ (see e.g. \cite{Malliavin1997}),
set

\[
c_{p,r}\left(O\right)=\inf\left\{ \lVert\varphi\rVert_{\mathbb{D}_{r}^{p}}:\varphi\in\mathbb{D}_{r}^{p},\ \varphi\geq1\ \text{a.e. on }O,\ \varphi\geq0\ \text{a.e. on }\boldsymbol{W}\right\} .
\]
Next for an arbitrary subset $A$ of $\boldsymbol{W}$, 
\[
c_{p,r}\left(A\right)=\inf\left\{ c_{p,r}\left(O\right):A\subset O,\ O\ \text{is open}\right\} .
\]

A property $\pi=\pi(\omega)$ (whose description depends on $\omega\in\boldsymbol{W}$)
holds $(p,r)$-quasi-surely (or simply $(p,r)$-q.s.), if the set
on which $\pi$ is not satisfied has $(p,r)$-capacity zero. A property
$\pi$ is said to hold quasi-surely (q.s.) if it holds $(p,r)$-quasi-surely
for all $r=0,1,2,\cdots$ and $1<p<\infty$. A set is said to be slim if it has zero $(p,r)$-capacity for all $r=0,1,2,\cdots$ and $1<p<\infty$.

Let us recall a few elementary properties of capacities $c_{p,r}$
(see e.g. Section 1.2, Chapter IV, \cite{Malliavin1997}). By definition
every capacity is an outer measure, so that $c_{p,r}(A)\leq c_{p,r}(B)$
for $A\subset B\subset\W$, and $c_{p,r}$ is sub-additive in that
\[
c_{p,r}\left(\bigcup_{n=1}^{\infty}A_{n}\right)\leq\sum_{n=1}^{\infty}c_{p,r}(A_{n})
\]
for any $A_{n}\subset\boldsymbol{W}$, $n=1,2,\cdots.$

The family of capacities $c_{p,r}$ are comparable in the sense that
for $\forall A\subset\boldsymbol{W}$, $1<p<q<\infty$, and $r<s$, we have
$c_{p,r}(A)\leq c_{q,r}(A)$ and $c_{p,r}(A)\le c_{p,s}(A)$. In particular,
by definition, $\left(\P(A)\right)^{\frac{1}{p}}\leq c_{p,r}(A)$
for any $A\in\mathscr{F}$.

The first Borel-Cantelli lemma can be proved in the context of capacities, and it says that if a sequence of subsets $\{A_{n}\}_{n=1}^{\infty}$ of $\boldsymbol{W}$
satisfies $\sum_{n=1}^{\infty}c_{p,r}(A_{n})<\infty$, then $c_{p,r}(\limsup_{n\to\infty}A_{n})=0$. One may refer to \cite{Malliavin1997} for a proof.  

Another important tool used in this paper is the capacity version
of Chebyshev's inequality, which is
\[
c_{p,r}\left(\varphi>\lambda\right)\leq\frac{\lVert\varphi\rVert_{\mathbb{D}_{r}^{p}}}{\lambda},
\]
for every lower semi-continuous $\varphi\in\mathbb{D}_{r}^{p}$ and $\lambda>0$. 

\subsection{The main result}

We are now ready to introduce the definition of large deviation
principle (LDP) with respect to capacities as in \cite{Boedihardjo2016} and \cite{Yoshida1993}. Then we define fractional Brownian motions and its integral representation according to \cite{Decreusefond1999} (see also Chapter 5 in \cite{Nualart2006}) and state the main result at the end of this section.
\begin{defn}
\label{def:LDP}Let $r\in\mathbb{N}$ and $p>1$, and $\left\{ X^{\varepsilon}:\varepsilon>0\right\} $
be a family of $(p,r)$-quasi-surely defined mappings from $\boldsymbol{W}$
to a Polish space $(Y,d)$. We say that the family $\{X^{\varepsilon}:\varepsilon>0\}$
satisfies the $c_{p,r}$-large deviation principle ($c_{p,r}$-LDP) with a good rate function $I:Y\to[0,\infty]$
if 

(1) $I$ is lower semi-continuous and for every $\alpha>0$, the
level set 
\[
\Psi_{I}(\alpha)=\left\{ y\in Y:I(y)\leq\alpha\right\} 
\]
is compact in $Y$; and

(2) for every closed $F\subset Y$, 
\[
\limsup_{\varepsilon\to0}\varepsilon^{2}\log c_{p,r}\left\{ \omega\in\boldsymbol{W}:X^{\varepsilon}(\omega)\in F\right\} \leq-\frac{1}{p}\inf_{y\in F}I(y),
\]
for every open $G\subset Y$,
\[
\limsup_{\varepsilon\to0}\varepsilon^{2}\log c_{p,r}\left\{ \omega\in\boldsymbol{W}:X^{\varepsilon}(\omega)\in G\right\} \geq-\frac{1}{p}\inf_{y\in G}I(y).
\]
\end{defn}

Recall that a fractional Brownian motion (fBM) $\left(B_{t}\right)_{t\geq0}$ with Hurst parameter
$H\in\left(0,1\right)$ is a self-similar Gaussian process
with stationary increments whose covariance function is given by
\[
\text{Cov} (s,t)=\mathbb{E}\left[B_{s}B_{t}\right]=\frac{1}{2}\left(t^{2H}+s^{2H}-|t-s|^{2H}\right).
\]
FBMs can be realised as Wiener functionals in the way that
\begin{equation}
B_{t}(\omega)=\int_{0}^{t}K(t,s)d\omega(s),\label{eq:integral_rep}
\end{equation}
where $\left\{ \omega(s):s\geq0\right\} $ is the coordinate mapping process
on $\boldsymbol{W}$ (hence a Brownian motion). $B_{t}$ is defined
almost surely \textendash{} the integral on the right-hand side is
understood in the It\^{o}'s sense. $K$ is a singular kernel, and when $H>\frac{1}{2}$, it is given by 
\[
K(t,s)=c_{H}s^{\frac{1}{2}-H}\int_{s}^{t}(u-s)^{H-\frac{3}{2}}u^{H-\frac{1}{2}}du,\quad s<t,\label{eq:K_repn}
\]
where $c_{H}$ is some constant depending only on
$H$. It is straightforward by the Kolmogorov continuity theorem that the process defined by (\ref{eq:integral_rep}) has a modification which is $\alpha$-H{\"o}lder continuous with $\alpha<H$. We abuse our notation by using $\left(B_{t}\right)_{t\geq0}$ to denote such a modification, and in the sequel, when we mention fBM, we always refer to this modified version. 

To state the quasi-sure large deviation principles for fBMs, we need to identify
the corresponding rate functions, which must be the same rate functions as in the
context of probabilities. Notice that (\ref{eq:integral_rep}) defines a mapping $B:\W\to \W$ taking almost all Brownian motion paths to the paths of fBM with Hurst parameter $H$. Let $\mathbb{Q}=\P\circ B^{-1}$ be the
push-forward of the Wiener measure, which is the distribution of this version of fBM, a Gaussian measure on
$\left(\W,\mathscr{B}(\W)\right)$. Similar to the case of Brownian motion, there is a canonical way to associate this Gaussian measure and $\W$ with a separable Hilbert space $\hat{\mathscr{H}}$, which can be continuously and densely embedded into $\W$ (see e.g. \cite{Bogachev1998}). In \cite{Decreusefond1999}, the Cameron-Martin space $\hat{\mathscr{H}}$ corresponding to the fractional Brownian motion with Hurst parameter $H$ was identified, and is the space consisting of all elements of the form $h(t)=\int ^t _0K(t,s)\dot{h}(s)ds$, where $K$ is as in (\ref{eq:K_repn}), and $\dot{h}\in L^2([0,1])$. The inner product on $\hat{\mathscr{H}}$ is defined as 
\[
\langle h_1,h_2 \rangle_{\hat{\mathscr{H}}}=\int^1_0 \dot{h}_1(s)\dot{h}_2(s)ds, 
\]
where ${h}_i(t)=\int^t _0K(t,s)\dot{h}_i(s)ds$, $i=1,2$. Then
\[
\int_{\W}e^{il(\omega)}\mathbb{Q}(d\omega)=e^{-\frac{\lVert l\rVert_{\hat{\mathscr{H}}}^{2}}{2}},\quad\forall l\in\W^{*}.
\]
Let $\left\{ \Q_{\varepsilon}\right\} $ be the family of scaled measures,
the laws of $\{\varepsilon\omega\}$ under $\Q$. By definition
\begin{equation}
\Q_{\varepsilon}(A)=\Q\left\{ \omega\in\W:\varepsilon\omega\in A\right\} =\P\left\{ \omega\in\W:\varepsilon B(\omega)\in A\right\} \label{32}
\end{equation}
for each $A\in\mathscr{B}(\W)$. According to Theorem 3.4.12 on page
88 in \cite{Deuschel2001}, $\{\Q_{\varepsilon}\}$ satisfies the
LDP with the good rate function $I$ given by 
\[
I(\omega)=\begin{cases}
\frac{\lVert\omega\rVert_{\hat{\mathscr{H}}}^{2}}{2}, & \omega\in\hat{\mathscr{H}},\\
\infty, & \text{otherwise}.
\end{cases}
\]

Now we are in a position to state the main result of the present paper. Let $(B_t)_{t\geq 0}$ be the continuous modification of fBM given by (\ref{eq:integral_rep}) with Hurst parameter $H$.
\begin{thm}
\label{thm:main LDP}Let $r\in\mathbb{N}$, $p>1$ and $\frac{1}{2}\leq H<1$.
Then we have the following conclusions.
\begin{enumerate}
\item[(1)] There exists a modification of $B_{t}$ (for $t\geq0$) defined $(p,r)$-quasi-surely.
\item[(2)] Let $X_{t}^{\varepsilon}\left(\omega\right)=B_{t}\left(\varepsilon\omega\right)$
for all $\omega$ expect for a $c_{p,r}$-zero subset (for all $t\geq0$,
$\varepsilon>0$). Then $\left\{ X^{\varepsilon}:\varepsilon>0\right\} $
(which are scaled fBMs with Hurst parameter $H\in\left[\frac{1}{2},1\right)$)
satisfies the $c_{p,r}$-LDP with the good rate function 
\begin{equation}
I(\omega)=\begin{cases}
\frac{\lVert\omega\rVert_{\hat{\mathscr{H}}}^{2}}{2}, & \omega\in\hat{\mathscr{H}},\\
\infty, & \text{otherwise}.
\end{cases}\label{eq:33}
\end{equation}
\end{enumerate}
\end{thm}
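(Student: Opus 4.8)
The plan is to establish the two assertions in turn, Part~(1) furnishing the quasi-surely defined process on which Part~(2) rests. For Part~(1), I would first note that for each fixed $t$ the functional $\omega\mapsto B_t(\omega)=\int_0^t K(t,s)\,d\omega(s)$ lies in the first Wiener chaos, namely $B_t=[k_t]$ with $\dot k_t(s)=K(t,s)\mathbf{1}_{[0,t]}(s)$; when $H\geq\frac12$ the Volterra kernel satisfies $K(t,\cdot)\in L^2([0,t])$, so $k_t\in\mathscr{H}$ and $B_t\in\mathbb{D}_r^p$ for all $p,r$. Since the higher Malliavin derivatives of a first-chaos element vanish, one has the clean bound $\lVert B_t\rVert_{\mathbb{D}_r^p}\leq C_p\lVert k_t\rVert_{\mathscr{H}}$ with $C_p$ independent of $r$, and likewise $\lVert B_t-B_s\rVert_{\mathbb{D}_r^p}\leq C_p\lVert k_t-k_s\rVert_{\mathscr{H}}=C_p|t-s|^{H}$, the last identity coming from the isometry $\mathbb{E}[[h]^2]=\lVert h\rVert_{\mathscr{H}}^2$ and the covariance of the fBM. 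Feeding these increment bounds into the capacity Chebyshev inequality and the capacity Borel--Cantelli lemma, a Kolmogorov-type continuity argument carried out in the capacity setting (with the integrability parameter taken large enough, the case of general $p$ following from the comparability of the $c_{p,r}$) produces a version of $(B_t)$ that is jointly continuous in $t$ off a slim set, hence a $(p,r)$-quasi-surely defined modification. This is the content of Section~4.

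For Part~(2) I would follow the standard three-step route to a large deviation principle, in the capacity formulation of \cite{Boedihardjo2016,Yoshida1993}. Because $B_t$ is linear in $\omega$ one has $X_t^{\varepsilon}=\varepsilon B_t$, so the scaling is transparent and the problem is a Schilder-type statement for the Gaussian process $B$. I would introduce finite-dimensional approximations $X^{\varepsilon,(n)}$ by projecting onto the first $n$ terms of an orthonormal expansion $B_t=\sum_i\langle k_t,e_i\rangle_{\mathscr{H}}[e_i]$, so that each $X^{\varepsilon,(n)}$ is a smooth, finite-dimensional Gaussian Wiener functional of the scaled coordinates $\varepsilon[e_i]$. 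For these I would prove the finite-dimensional $c_{p,r}$-LDP by explicit computation, estimating the capacity of the relevant tail events from above and below by the $\mathbb{D}_r^p$-norms of suitable cut-off functions; the resulting exponential rate reproduces the Gaussian rate function, and the factor $\tfrac1p$ in Definition~\ref{def:LDP} emerges from the comparison $(\P(A))^{1/p}\leq c_{p,r}(A)$ together with the $L^p$ structure of the Sobolev norm. As the authors indicate, this finite-dimensional step is the computational heart of the argument.

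Second, I would establish exponential tightness of $\{X^{\varepsilon}\}$ in the capacity sense, i.e.\ that for each $L>0$ there is a compact $\mathcal{K}_L\subset\boldsymbol{W}$ with $\limsup_{\varepsilon\to0}\varepsilon^2\log c_{p,r}\{X^{\varepsilon}\notin\mathcal{K}_L\}\leq-L$, together with the key exponentially-good-approximation property $\lim_{n\to\infty}\limsup_{\varepsilon\to0}\varepsilon^2\log c_{p,r}\{\lVert X^{\varepsilon}-X^{\varepsilon,(n)}\rVert>\delta\}=-\infty$ for every $\delta>0$. I expect the main obstacle to lie precisely here: since $c_{p,r}$ is merely sub-additive and not a measure, the usual tightness and exponential-approximation estimates must be rebuilt from $\mathbb{D}_r^p$-norm bounds, and controlling the uniform (indeed Hölder) distance between the fBM and its truncation uniformly in $\varepsilon$ calls for the rough-paths and Besov-type estimates for the singular Volterra kernel developed in Section~5.

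Finally, I would combine the finite-dimensional LDP, the exponential tightness, and the exponential goodness of the approximations by means of the capacity versions of the approximation and contraction principles recalled in Section~3; this transfers the $c_{p,r}$-LDP to $\{X^{\varepsilon}\}$ with a rate function furnished by the approximation theorem, which remains to be identified with \eqref{eq:33}. For the identification I would use that $B$ restricts to a linear isometry from $\mathscr{H}$ onto $\hat{\mathscr{H}}$: every element $h(t)=\int_0^t K(t,s)\dot h(s)\,ds$ of $\hat{\mathscr{H}}$ is the image of the $\mathscr{H}$-element with the same density $\dot h$, and $\lVert h\rVert_{\mathscr{H}}=\lVert B(h)\rVert_{\hat{\mathscr{H}}}$. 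The contraction principle applied to Schilder's rate function $\tfrac12\lVert\cdot\rVert_{\mathscr{H}}^2$ then gives $I(\omega)=\inf\{\tfrac12\lVert h\rVert_{\mathscr{H}}^2:B(h)=\omega\}=\tfrac12\lVert\omega\rVert_{\hat{\mathscr{H}}}^2$ for $\omega\in\hat{\mathscr{H}}$ and $\infty$ otherwise, which coincides with the probabilistic rate function of $\{\Q_\varepsilon\}$ from Theorem~3.4.12 of \cite{Deuschel2001}. This completes Section~6 and the proof.
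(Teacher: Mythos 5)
Your architecture matches the paper's (quasi-sure construction, finite-dimensional $c_{p,r}$-LDP by explicit computation, exponentially good approximations, contraction/approximation principles, identification of the rate function), but there are two genuine gaps. The first is in Part (1). Noting that $B_t=[k_t]$ lies in the first chaos and hence in $\mathbb{D}_r^p$ does not by itself produce a $(p,r)$-quasi-surely defined representative: the It\^o integral $\int_0^tK(t,s)\,d\omega(s)$ is only an a.s.\ equivalence class, and to refine it one must exhibit a sequence of everywhere-defined cylindrical functionals converging quasi-surely. This is precisely the content of the paper's Theorem \ref{thm:Th4-2}: the dyadic Riemann sums $B_t^{(m)}$ of (\ref{eq:11}) are shown to be quasi-surely Cauchy, and the hard part is that $K(t,\cdot)$ is singular at $s=0$, so the increments $K(t,s+\alpha)-K(t,s)$ must be controlled by trading them for increments in the $t$-variable (the bounds (\ref{eq:diff_upp_bd})--(\ref{eq:diff_lower_bd}) and the $L_i,U_i$ estimates). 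Your sketch contains no substitute for this step; your H\"older bound $\lVert B_t-B_s\rVert_{\mathbb{D}_r^p}\leq C|t-s|^H$ controls increments in $t$, which is the easy direction and is what Section 5 uses, not what Section 4 needs. Moreover, even for the chaining in $t$, the first-moment Chebyshev bound $c_{p,r}(|B_t-B_s|>\lambda)\lesssim\lambda^{-1}|t-s|^{H}$ is too weak (summing $2^n$ dyadic pairs at scale $2^{-n}$ forces $H>1+\gamma$); one must pass to $|B_t-B_s|^{2N}$ and use the hypercontractivity comparison of Proposition \ref{prop:Lp to L2} to get $\lambda^{-2N}|t-s|^{2NH}$, and later take $N=\lfloor\varepsilon^{-2}\rfloor$ to extract the exponential rate. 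Neither ingredient appears in your proposal.

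The second gap is in the identification of the rate function. The map $B$ is not continuous on $\boldsymbol{W}$ (it is only quasi-surely defined), so Varadhan's contraction principle cannot be applied to push Schilder's LDP through $B$; the formula $I(\omega)=\inf\{\tfrac12\lVert h\rVert_{\mathscr{H}}^2:B(h)=\omega\}$ must be obtained another way. The paper instead applies the contraction principle only to the genuinely continuous interpolation maps $F_m:\mathbb{R}^{2^m+1}\to\boldsymbol{W}$, identifies the resulting rate functions $J_m$ with the classical rate functions $\hat{J}_m$ of $\Q_\varepsilon\circ\hat{F}_m^{-1}$ by uniqueness of rate functions, and then verifies the hypotheses (\ref{eq:34})--(\ref{eq:35}) of Proposition \ref{prop:Exp-good-approx} and proves $I=J$ by a separate compactness and approximation argument; this verification is a nontrivial part of Section 6 that your outline leaves entirely open. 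A further, more minor issue: your finite-dimensional approximations are basis truncations $\sum_{i\le n}\langle k_t,e_i\rangle_{\mathscr{H}}[e_i]$ rather than the paper's dyadic time interpolations. This could in principle work, but the exponential-goodness estimate in the uniform norm then requires quantitative sup-norm tail bounds for the expansion, for which you have no analogue of the Lyons--Qian dyadic estimate (Proposition \ref{prop:Lyons Qian}) that the paper's choice is designed to exploit; and the genuine exponential tightness (compact $\mathcal{K}_L$) you mention is neither needed nor proved in the paper, which relies solely on the exponentially-good-approximation route.
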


The remaining of this paper is devoted to the proof of the above result,
Theorem \ref{thm:main LDP}. The first part of Theorem
\ref{thm:main LDP} follows from Theorem \ref{thm:Th4-2} in Section
4 directly. The proof of the second part will be presented in Section 5 and Section 6. 

Our strategy of the proof is the following. For each
fixed $t\in[0,1]$ and $m\in\mathbb{N}$, we consider $B_{t}^{(m)}$, a finite linear combination of elements in the classical Wiener space, and show that the sequence
$(B_{t}^{(m)})_{m\in\mathbb{N}}$ converges quasi-surely
(with respect to the Brownian motion capacity). The main difficulty
here is that the kernel $K(t,s)$ is singular in $s$, so it is very
difficult to control its increments in $s$ and estimate the integral
of $K$ over small time intervals near time $s=0$. However, we notice
that $K$ as a function of $t$, behaves more regularly. Therefore,
we control the difference $K(t,s+\alpha)-K(t,s)$ by the difference
of $K$ when $t$ varies. Then we may obtain the desired mapping $X$, a quasi-surely defined modification of fBM on the Wiener space by approximations with linear interpolations. Define
\[
X^{(n)}(\omega)(t):=B_{\frac{k-1}{2^{n}}}(\omega)+2^{n}\left(t-\frac{k-1}{2^{n}}\right)\left(B_{\frac{k}{2^{n}}}(\omega)-B_{\frac{k-1}{2^{n}}}(\omega)\right),\quad\forall\frac{k-1}{2^{n}}\leq t\leq\frac{k}{2^{n}}.
\]
For each $n$, $X^{(n)}$ is quasi-surely defined, then we show that
this sequence $(X^{(n)})_{n\in\mathbb{N}}$ converges to some mapping $X$ quasi-surely.
Since any countable union of capacity zero sets still has zero capacity,
we conclude that the limit $X$ is quasi-surely defined, and we shall
also see that this convergence is exponentially fast in the proof.
Now as the large deviation principle may be established for $X^{(n)}$'s,
using exponentially good approximations result from the LDP theory, we
deduce the $c_{p,r}$-LDP for the limit mapping $X$. 

\section{Some technical facts}

In this section we collect a few technical facts which will be used
to prove the quasi-sure version of large deviation principles for
fBMs.

\subsection{Wiener chaos decomposition}

The $n$-th Wiener chaos $\mathcal{H}_{n}$, $n=0,1,2,\cdots$,
is the closed subspace of $L^{2}(\W)$ generated by all random variables
$H_{n}([h])$, $\forall h\in\mathscr{H}$ with $\lVert h\rVert_{\mathscr{H}}=1$,
where $H_{n}(x)$ is the $n$-th Hermite polynomial. $\mathcal{H}_{n}$
and $\mathcal{H}_{m}$ are orthogonal subspaces of $L^{2}(\W)$ when
$m\neq n$. One important result is the Wiener chaos decomposition, which states that
\[
L^{2}(\W,\mathscr{G},\P)=\oplus_{n=0}^{\infty}\mathcal{H}_{n},
\]
where $\mathscr{G}$ is the $\sigma$-algebra generated by random variables $\{[h]:h\in \mathscr{H}\}$ (see e.g. Theorem 1.1.1, Section 1.1 in \cite{Nualart2006} and its
proof). The projection from $L^{2}(\W)$ to $\mathcal{H}_{n}$ is
denoted by $J_{n}$. Let $\mathcal{P}_{n}^{0}$ denote the space of
polynomial random variables of the form
\[
F=p([h_{1}],\cdots[h_{k}]),\quad\forall h_{1},\cdots h_{k}\in\mathscr{H},
\]
where $p$ is a polynomial with degree less than or equal to $n$,
and let $\mathcal{P}_{n}$ be the closure of $\mathcal{P}_{n}^{0}$ in $L^{2}(\W)$. Then it follows that for every integer $n$, $\mathcal{P}_{n}=\oplus_{m=0}^{n}\mathcal{H}_{m}$. If $F\in\mathbb{D}_{r}^{p}$ for some $p\geq2$, then for all $l\leq r$,
\begin{equation}
\left\Vert \lVert D^{l}F\rVert_{\mathscr{H}^{\otimes l}}\right\Vert _{2}^{2}=\sum_{n=l}^{\infty}n(n-1)\cdots(n-l+1)\lVert J_{n}F\rVert_{2}^{2}.\label{eq:deriv_decomp_L2norm}
\end{equation}
For a proof of this relation, one may refer to Section 1.2 in \cite{Nualart2006}.

To establish next proposition, we need to apply the hypercontractivity property of Ornstein-Uhlenbeck semigroup. The Ornstein-Uhlenbeck semigroup $(T_{t})_{t\geq0}$ is a semigroup of contractions on $L^{2}(\W)$ defined by 
\[
T_{t}F=\sum_{n=0}^{\infty}e^{-nt}J_{n}F.
\]
This definition can be extended to $L^p(\W)$ and for every $p>1$, $(T_{t})_{t\geq0}$ is defined to be a semigroup of contractions on $L^{p}(\W)$, where for each $t\geq0$,
\[
T_{t}F(\omega)=\int_{\W}F(e^{-t}\omega+\sqrt{1-e^{-2t}}x)\P(dx).
\]
This semigroup enjoys the hypercontractivity property, which is  $\lVert T_{t}F\rVert_{q(t)}\leq\lVert F\rVert_{p}$ for all $F\in L^{p}(\W)$, where $p>1$, $t>0$ and $q(t)=e^{2t}(p-1)+1>p$. For a proof, see e.g. Theorem 1.4.1, Section 1.4, \cite{Nualart2006}. 
\begin{prop}
\label{prop:Lp to L2}Let $F\in\mathcal{P}_{n}$. Then
\begin{equation}
\lVert F\rVert_{q}\leq(n+1)(q-1)^{\frac{n}{2}}\lVert F\rVert_{2}\label{eq:Lp_control_L2}
\end{equation}
and
\begin{equation}
\left\Vert \lVert D^{l}F\rVert_{\mathscr{H}^{\otimes l}}\right\Vert _{2}\leq n^{\frac{l}{2}}\lVert F\rVert_{2}\label{eq:deriv_norm_L2_norm}
\end{equation}
 for any $q>2$ and $l\le n$.
\end{prop}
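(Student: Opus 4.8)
The plan is to treat the two estimates separately, in both cases decomposing $F$ into its Wiener chaos components and exploiting the fact that $F\in\mathcal{P}_n=\oplus_{m=0}^n\mathcal{H}_m$ has only finitely many nonzero components $J_mF$, $0\le m\le n$.

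For (\ref{eq:Lp_control_L2}) I would first record a single-chaos estimate. If $G\in\mathcal{H}_m$ then $T_tG=\sum_{k}e^{-kt}J_kG=e^{-mt}G$, so taking $p=2$ in the hypercontractivity bound (for which $q(t)=e^{2t}+1$) gives $e^{-mt}\lVert G\rVert_{q(t)}=\lVert T_tG\rVert_{q(t)}\le\lVert G\rVert_2$. Since any $q>2$ arises as $q=e^{2t}+1$ for a unique $t>0$, solving $e^{2t}=q-1$ yields $e^{mt}=(q-1)^{m/2}$ and hence $\lVert G\rVert_q\le(q-1)^{m/2}\lVert G\rVert_2$. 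Then for general $F\in\mathcal{P}_n$ I would write $F=\sum_{m=0}^nJ_mF$, apply the triangle inequality in $L^q$ and the single-chaos bound to get $\lVert F\rVert_q\le\sum_{m=0}^n(q-1)^{m/2}\lVert J_mF\rVert_2$, and finally use $q-1>1$ (so $(q-1)^{m/2}\le(q-1)^{n/2}$ for $m\le n$) together with the orthogonality relation $\lVert J_mF\rVert_2\le\lVert F\rVert_2$ to bound the $n+1$ summands by $(n+1)(q-1)^{n/2}\lVert F\rVert_2$.

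For (\ref{eq:deriv_norm_L2_norm}) I would invoke the identity (\ref{eq:deriv_decomp_L2norm}) directly, writing it with a summation index $k$ to avoid clashing with the fixed degree $n$: $\bigl\lVert\lVert D^lF\rVert_{\mathscr{H}^{\otimes l}}\bigr\rVert_2^2=\sum_{k=l}^{\infty}k(k-1)\cdots(k-l+1)\lVert J_kF\rVert_2^2$. Because $F\in\mathcal{P}_n$ forces $J_kF=0$ for $k>n$, the sum truncates at $k=n$, and on the range $l\le k\le n$ the descending factorial obeys $k(k-1)\cdots(k-l+1)\le k^l\le n^l$. Factoring $n^l$ out and using $\sum_{k=l}^n\lVert J_kF\rVert_2^2\le\lVert F\rVert_2^2$ gives the squared bound $n^l\lVert F\rVert_2^2$, and a square root finishes (\ref{eq:deriv_norm_L2_norm}).

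Neither part poses a genuine obstacle once the tools of this section are in hand; the proof is essentially bookkeeping on the chaos expansion. The one point deserving care is the choice $p=2$ in the hypercontractivity inequality, since it is precisely this choice that produces the target factor $(q-1)^{m/2}$ — a different base exponent would give the wrong power of $q-1$. A minor secondary point is that the constant $n+1$ (rather than the sharper $\sqrt{n+1}$ obtainable by Cauchy--Schwarz) comes from bounding each of the $n+1$ chaos norms crudely by $\lVert F\rVert_2$, which is all the proposition requires.
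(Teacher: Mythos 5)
Your proposal is correct and follows essentially the same route as the paper: the single-chaos bound $\lVert G\rVert_q\le(q-1)^{m/2}\lVert G\rVert_2$ obtained from hypercontractivity with $p=2$ and $q=e^{2t}+1$, followed by the triangle inequality over the $n+1$ chaos components; and for the derivative bound, the identity (\ref{eq:deriv_decomp_L2norm}) truncated at degree $n$ with the descending factorial crudely bounded by $n^{l}$. No gaps; your remarks on the choice $p=2$ and the crude constant $n+1$ match what the paper actually does.
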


\begin{proof}
The first inequality (\ref{eq:Lp_control_L2}) results from
hypercontractivity. Take $F\in\mathcal{H}_{n}$. Then $T_{t}F=e^{-nt}F$.
Set $p=2$, and $q=q(t)=1+e^{2t}$, so $t=\frac{1}{2}\log(q-1)$,
and hence by hypercontractivity, 
\[
\lVert T_{t}F\rVert_{q}=\lVert e^{-\frac{n}{2}\log(q-1)}F\rVert_{q}=(q-1)^{-\frac{n}{2}}\lVert F\rVert_{q}\leq\lVert F\rVert_{2},
\]
which implies that
\[
\lVert F\rVert_{q}\leq(q-1)^{\frac{n}{2}}\lVert F\rVert_{2}.
\]
Now let $F=\sum_{m=0}^{n}J_{m}F\in\mathcal{P}_{n}$, then
\[
\lVert F\rVert_{q}\leq\sum_{m=0}^{n}\lVert J_{m}F\rVert_{q}\leq\sum_{m=0}^{n}(q-1)^{\frac{m}{2}}\lVert J_{m}F\rVert_{2}\leq(n+1)(q-1)^{\frac{n}{2}}\lVert F\rVert_{2}.
\]

To prove (\ref{eq:deriv_norm_L2_norm}), we apply (\ref{eq:deriv_decomp_L2norm})
to $F\in\mathcal{P}_{n}$, where $F\in\mathbb{D}_{l}^{2}$ for $l\leq n$.
Notice that $J_{m}F$'s vanish when $m>n$, and thus $\lVert F\rVert_{2}^{2}=\sum_{m=0}^{n}\lVert J_{m}F\rVert_{2}^{2}$,
so that
\[
\left\Vert \lVert D^{l}F\rVert_{\mathscr{H}^{\otimes l}}\right\Vert _{2}^{2}=\sum_{m=l}^{n}m(m-1)\cdots(m-l+1)\lVert J_{m}F\rVert_{2}^{2}\leq n^{l}\sum_{m=l}^{n}\lVert J_{m}F\rVert_{2}^{2}\leq n^{l}\lVert F\rVert_{2}^{2}.
\]

\end{proof}

\subsection{Exponential tightness}

Most conclusions in the theory of large deviations (see \cite{Dembo2009,Deuschel2001}
for details) are still valid in the context of capacities. Let us
state some of them which will be used in this paper, their proofs
are routine and will be omitted. 
\begin{prop}
\label{thm:Contraction-Principle}\emph{(Varadhan's Contraction Principle)}
Let $\{X^{\varepsilon}:\varepsilon>0\}$ be a family of $(p,r)$-quasi-surely
defined maps from $\boldsymbol{W}$ to a Polish space $(Y_{1},d_{1})$ satisfying
the $c_{p,r}$-LDP with the good rate function $I$. Let $F$ be a continuous
map from $(Y_{1},d_{1})$ to another Polish space $(Y_{2},d_{2})$. Then
the family $\left\{ F\circ X^{\varepsilon}:\varepsilon>0\right\} $
of $(p,r)$-quasi-surely defined maps satisfies the $c_{p,r}$-LDP
with the good rate function 
\[
J(z)=\inf_{y:F(y)=z}I(y),
\]
where $\inf\emptyset=\infty$.
\end{prop}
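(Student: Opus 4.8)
The plan is to reproduce the classical contraction-principle argument, observing that the only features of the set function $c_{p,r}$ that enter are its outer-measure properties — monotonicity under inclusion and countable subadditivity — together with the elementary pull-back identity $\{F\circ X^{\varepsilon}\in A\}=\{X^{\varepsilon}\in F^{-1}(A)\}$, valid wherever $X^{\varepsilon}$ is defined. Since $F$ is defined on all of $Y_{1}$ and $X^{\varepsilon}$ is $(p,r)$-quasi-surely defined, the composite $F\circ X^{\varepsilon}$ is again $(p,r)$-quasi-surely defined, and the two events above coincide outside a $c_{p,r}$-null set, so they have equal capacity; thus Definition~\ref{def:LDP} applies to $F\circ X^{\varepsilon}$, and every bound for it reduces to the corresponding bound for $X^{\varepsilon}$ evaluated on a preimage. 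I would set $J(z)=\inf_{y:F(y)=z}I(y)$ as in the statement, with $\inf\emptyset=\infty$.

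First I would check that $J$ is a good rate function, via the identity $\Psi_{J}(\alpha)=F(\Psi_{I}(\alpha))$ for every $\alpha\geq 0$. The inclusion $F(\Psi_{I}(\alpha))\subset\Psi_{J}(\alpha)$ is immediate from the definition of $J$. For the reverse inclusion, take $z$ with $J(z)\leq\alpha$; then the fibre $F^{-1}(\{z\})$ is nonempty, and for each $n$ there is $y_{n}$ with $F(y_{n})=z$ and $I(y_{n})\leq\alpha+\tfrac1n$, so $(y_{n})$ lies in the compact level set $\Psi_{I}(\alpha+1)$ and admits a subsequence converging to some $y^{*}$. Continuity of $F$ gives $F(y^{*})=z$, while lower semicontinuity of $I$ gives $I(y^{*})\leq\alpha$, whence $z\in F(\Psi_{I}(\alpha))$. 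Since $I$ is good, $\Psi_{I}(\alpha)$ is compact, so $\Psi_{J}(\alpha)=F(\Psi_{I}(\alpha))$ is compact as the continuous image of a compact set; being compact it is closed, which yields the lower semicontinuity of $J$. This verification is the only place where the hypotheses (compact level sets of $I$, continuity of $F$) are genuinely used, and I expect it to be the main — though still routine — obstacle.

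Finally I would transfer the two large-deviation bounds. For a closed set $C\subset Y_{2}$, continuity of $F$ makes $F^{-1}(C)$ closed in $Y_{1}$, so by the pull-back identity and the $c_{p,r}$-LDP upper bound for $\{X^{\varepsilon}\}$,
\[
\limsup_{\varepsilon\to0}\varepsilon^{2}\log c_{p,r}\{F\circ X^{\varepsilon}\in C\}\leq-\tfrac1p\inf_{y\in F^{-1}(C)}I(y);
\]
likewise, for an open set $G\subset Y_{2}$ the set $F^{-1}(G)$ is open, and the lower bound for $\{X^{\varepsilon}\}$ gives the analogous inequality with $\geq$ and $G$ in place of $C$. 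In both cases I would invoke the infimum identity $\inf_{y\in F^{-1}(A)}I(y)=\inf_{z\in A}\inf_{y:F(y)=z}I(y)=\inf_{z\in A}J(z)$, valid for any $A\subset Y_{2}$ by reorganising the double infimum over the fibres $F^{-1}(\{z\})$. Substituting this identity into the two displayed bounds produces exactly conditions (1) and (2) of Definition~\ref{def:LDP} for $F\circ X^{\varepsilon}$ with rate function $J$, completing the argument. Because the definition of the $c_{p,r}$-LDP phrases both bounds through $\limsup$, no $\liminf$/$\limsup$ bookkeeping is required and the bounds transfer without further modification.
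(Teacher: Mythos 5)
Your proof is correct: the paper itself omits the argument, stating only that such proofs ``are routine and will be omitted,'' and what you give is precisely that routine classical contraction-principle argument, correctly isolating the only capacity-specific points (monotonicity, subadditivity, and agreement of the two events outside a $c_{p,r}$-null set) together with the standard verification that $\Psi_{J}(\alpha)=F(\Psi_{I}(\alpha))$ is compact. No gaps.
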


To deal with fBMs, which are merely measurable Wiener functionals, the concept of exponential tightness is a useful technique in proving large deviation principles. The natural modification of this notion can be formulated as the following. 
\begin{defn}
For $m=1,2,\cdots$, let $\left\{ X^{\varepsilon,(m)}:\varepsilon>0\right\}$ 
and $\{X^{\varepsilon}:\varepsilon>0\}$ be families of $(p,r)$-quasi-surely defined
mappings from $\W$ to some Polish space $(Y,d)$. Then $\left\{ X^{\varepsilon,(m)}:\varepsilon>0\right\} $
is said to be a family of exponentially good approximations of $\{X^{\varepsilon}:\varepsilon>0\}$
under $(p,r)$-capacity if for all $\lambda>0$, 
\begin{equation}
\lim_{m\to\infty}\limsup_{\varepsilon\to0}\varepsilon^{2}\log c_{p,r}\left\{ \omega:d\left(X^{\varepsilon,(m)}(\omega),X^{\varepsilon}(\omega)\right)>\lambda\right\} =-\infty.\label{eq:exp-rate}
\end{equation}
\end{defn}

The following version of the contraction principle will be useful in our proof.
\begin{prop}
\label{prop:Exp-good-approx}Suppose that for each $m=1,2,\cdots$,
the family $\left\{ X^{\varepsilon,(m)}:\varepsilon>0\right\} $,
consisting of $(p,r)$-quasi-surely defined mappings from $\W$ to $\left(Y,d\right)$,
satisfies $c_{p,r}$-LDP with the good rate function $I_{m}$, and $\left\{ X^{\varepsilon,(m)}:\varepsilon>0\right\} $
are exponentially good approximations of $(p,r)$-quasi-surely defined mappings
$\left\{ X^{\varepsilon}:\varepsilon>0\right\} $. Define the function
\begin{equation}
J(y)=\sup_{\lambda>0}\liminf_{m\to\infty}\inf_{z\in B(y,\lambda)}J_{m}(z),\quad\forall y\in Y,\label{eq:34}
\end{equation}
where $B(y,\lambda)$ denotes the open ball in $(Y,d)$ with centre
$y$ and radius $\lambda$. If $J$ is a good rate function and for
every closed $C\subset Y$, 
\begin{equation}
\inf_{y\in C}J(y)\leq\limsup_{m\to\infty}\inf_{y\in C}J_{m}(y),\label{eq:35}
\end{equation}
then the family $\left\{ X^{\varepsilon}:\varepsilon>0\right\} $
satisfies $c_{p,r}$-LDP with the good rate function $J$.
\end{prop}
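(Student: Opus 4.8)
The plan is to transfer the classical theorem on exponentially good approximations (as in Dembo--Zeitouni or Deuschel--Stroock) to the capacity setting, exploiting the fact that the measure-theoretic argument uses only two structural features of $c_{p,r}$ recorded in Section 2: monotonicity and finite sub-additivity. Countable additivity is never invoked, so the template goes through once these are in place. Throughout I write $C^{\delta}=\{y\in Y:d(y,C)\leq\delta\}$ for the closed $\delta$-neighbourhood, and I denote the rate functions by $I_{m}$ (these are the $J_{m}$ appearing in (\ref{eq:34})--(\ref{eq:35})). I will repeatedly use the elementary ``largest-term'' fact that for non-negative $a_{\varepsilon},b_{\varepsilon}$ one has $\limsup_{\varepsilon\to0}\varepsilon^{2}\log(a_{\varepsilon}+b_{\varepsilon})=\max\{\limsup_{\varepsilon\to0}\varepsilon^{2}\log a_{\varepsilon},\limsup_{\varepsilon\to0}\varepsilon^{2}\log b_{\varepsilon}\}$, combined with sub-additivity $c_{p,r}(A\cup B)\leq c_{p,r}(A)+c_{p,r}(B)$.

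\textbf{Upper bound.} Fix a closed $C$ and $\delta>0$. From $\{X^{\varepsilon}\in C\}\subset\{X^{\varepsilon,(m)}\in C^{\delta}\}\cup\{d(X^{\varepsilon,(m)},X^{\varepsilon})>\delta\}$, sub-additivity and the largest-term principle give, for each fixed $m$,
\[
\limsup_{\varepsilon\to0}\varepsilon^{2}\log c_{p,r}\{X^{\varepsilon}\in C\}\leq\max\Big\{-\tfrac{1}{p}\inf_{y\in C^{\delta}}I_{m}(y),\ R_{m}(\delta)\Big\},
\]
where $R_{m}(\delta)=\limsup_{\varepsilon\to0}\varepsilon^{2}\log c_{p,r}\{d(X^{\varepsilon,(m)},X^{\varepsilon})>\delta\}$ and I applied the $c_{p,r}$-LDP upper bound for $\{X^{\varepsilon,(m)}\}$ to the closed set $C^{\delta}$. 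The left-hand side is independent of $m$; letting $m\to\infty$ and using $R_{m}(\delta)\to-\infty$ kills the second term, leaving $\limsup_{\varepsilon}\varepsilon^{2}\log c_{p,r}\{X^{\varepsilon}\in C\}\leq-\tfrac{1}{p}\limsup_{m}\inf_{C^{\delta}}I_{m}$. I then invoke hypothesis (\ref{eq:35}) with the closed set $C^{\delta}$ to bound $\limsup_{m}\inf_{C^{\delta}}I_{m}$ below by $\inf_{C^{\delta}}J$, and finally let $\delta\downarrow0$. Since $J$ is a good rate function and $C$ is closed, the standard topological lemma $\lim_{\delta\downarrow0}\inf_{C^{\delta}}J=\inf_{C}J$ applies and yields the bound $-\tfrac{1}{p}\inf_{C}J$.

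\textbf{Lower bound.} Fix an open $G$, a point $y\in G$, and $\lambda>0$ with $B(y,\lambda)\subset G$. The inclusion $\{X^{\varepsilon,(m)}\in B(y,\tfrac{\lambda}{2})\}\cap\{d(X^{\varepsilon,(m)},X^{\varepsilon})\leq\tfrac{\lambda}{2}\}\subset\{X^{\varepsilon}\in G\}$, with sub-additivity, gives
\[
c_{p,r}\{X^{\varepsilon}\in G\}\geq c_{p,r}\{X^{\varepsilon,(m)}\in B(y,\tfrac{\lambda}{2})\}-c_{p,r}\{d(X^{\varepsilon,(m)},X^{\varepsilon})>\tfrac{\lambda}{2}\}.
\]
Restricting to a subsequence in $m$ realising $\liminf_{m}\inf_{B(y,\lambda/2)}I_{m}$ (and assuming $J(y)<\infty$, else the claim is vacuous), the subtracted term has $\limsup_{\varepsilon}\varepsilon^{2}\log(\cdot)\to-\infty$ while the main term has $\limsup_{\varepsilon}\varepsilon^{2}\log(\cdot)\geq-\tfrac{1}{p}\inf_{B(y,\lambda/2)}I_{m}$, which stays bounded along the subsequence; for large $m$ the two are strictly separated, so the difference is asymptotically equivalent to the main term. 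Hence $\limsup_{\varepsilon}\varepsilon^{2}\log c_{p,r}\{X^{\varepsilon}\in G\}\geq-\tfrac{1}{p}\inf_{B(y,\lambda/2)}I_{m}$; sending $m\to\infty$ and using $\liminf_{m}\inf_{B(y,\lambda/2)}I_{m}\leq J(y)$ from (\ref{eq:34}) gives $-\tfrac{1}{p}J(y)$, and taking the supremum over $y\in G$ produces $-\tfrac{1}{p}\inf_{G}J$.

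The step I expect to be the main obstacle is the subtraction in the lower bound: unlike a sum, a difference of capacities does not interact cleanly with $\varepsilon^{2}\log$, so I must argue that the exponentially good approximation property makes the subtracted capacity negligible on the precise exponential scale where the main term lives. This is exactly where the strict inequality $R_{m}(\lambda/2)<-\tfrac{1}{p}\inf_{B(y,\lambda/2)}I_{m}$, valid for large $m$, is essential, and it forces the selection of an appropriate subsequence in $m$ \emph{before} the limit in $\varepsilon$ is taken. Everything else---sub-additivity, the largest-term principle, and the neighbourhood/topological lemma---is routine once this difficulty is handled.
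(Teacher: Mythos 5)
The paper states this proposition without proof (``their proofs are routine and will be omitted''), deferring to the classical exponentially-good-approximations theorem of Dembo--Zeitouni; your proposal is a correct and complete realisation of exactly that routine adaptation, using only the monotonicity and sub-additivity of $c_{p,r}$ together with the largest-term lemma. Both halves check out: the upper bound via the closed $\delta$-fattening $C^{\delta}$, hypothesis (\ref{eq:35}) and the goodness of $J$ to let $\delta\downarrow0$, and the lower bound via choosing the subsequence in $m$ \emph{before} the $\varepsilon$-limit so that the exponential scale of the approximation error is strictly dominated by the main term, which is indeed the one genuinely delicate point.
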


\section{FBMs as Wiener functionals}

Let $B=\left(B_{t}\right)_{t\geq0}$ the continuous version of fBM with Hurst parameter $H$ defined via (\ref{eq:integral_rep}). According to the transfer principle in \cite{Nualart2006} (see Proposition 5.2.1, Section 5.2, \cite{Nualart2006}), $B_{t}\in\mathbb{D}_{r}^{p}$,
and its Malliavin derivative may be computed explicitly as in the
following lemma (see also \cite{LI2019141}).
\begin{lem}
\label{lem:Malliavin deriv}Let $H\in(0,1)$, $r\in\mathbb{N}$ and
$p\in(1,\infty)$. Then for every $t>0$, $B_{t}\in\mathbb{D}_{r}^{p}$ and its first order Malliavin derivative is given by 
\[
DB_{t}(s)=\int_{0}^{s\wedge t}K(t,u)du.
\]
The higher order derivatives of $B_{t}$ vanish.
\end{lem}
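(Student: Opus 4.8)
The plan is to recognise $B_t$ as an element of the first Wiener chaos and to read its Malliavin derivative off directly from the definition of $D$ on smooth functionals. First I would set $\dot h_t(s)=K(t,s)\mathbf{1}_{\{s<t\}}$ and $h_t(s)=\int_0^{s\wedge t}K(t,u)\,du$ for $s\in[0,1]$, so that $h_t(0)=0$ and $\dot h_t$ is the generalised derivative of $h_t$. The integral representation (\ref{eq:integral_rep}) then says exactly that $B_t=[h_t]$, the image of $h_t$ under the Gaussian isometry, once we know $h_t\in\mathscr H$. So the first point to check is that $\dot h_t\in L^2([0,1])$, equivalently $\int_0^tK(t,s)^2\,ds<\infty$. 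This is the only step where the singularity of the kernel (\ref{eq:K_repn}) could cause trouble, but it is settled without any hand estimate: by the It\^{o} (Wiener) isometry, $\int_0^tK(t,s)^2\,ds=\mathbb E[B_t^2]=t^{2H}<\infty$, which is just the finite-variance property of fBM encoded in the Decreusefond--{\"U}st{\"u}nel representation. Hence $h_t\in\mathscr H$ and $B_t\in\mathcal H_1$.

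With $B_t=[h_t]$ and $h_t\in\mathscr H$ in hand, membership in $\mathbb D_r^p$ is immediate. Taking $f(x)=x\in C_p^\infty(\mathbb R)$ in the definition of a smooth random variable shows $[h_t]\in\mathcal S$, and every smooth random variable has finite Sobolev norms of all orders by Gaussian integrability, so $B_t\in\mathbb D_r^p$ for every $r\in\mathbb N$ and $p\in(1,\infty)$. Alternatively, since $B_t\in\mathcal H_1\subset\mathcal P_1$, I could invoke Proposition \ref{prop:Lp to L2} with $n=1$ to bound $\lVert B_t\rVert_q$ and $\lVert\lVert D^lB_t\rVert_{\mathscr H^{\otimes l}}\rVert_2$ by multiples of $\lVert B_t\rVert_2$, giving the same conclusion.

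It then remains to compute the derivatives. Applying $DF=\sum_i\partial_i f([h_1],\dots,[h_n])h_i$ to $F=[h_t]=f([h_t])$ with $f(x)=x$ yields $DB_t=h_t$ as an $\mathscr H$-valued random variable, that is, the deterministic element $s\mapsto\int_0^{s\wedge t}K(t,u)\,du$, which is precisely the claimed formula $DB_t(s)=\int_0^{s\wedge t}K(t,u)\,du$. Finally, because $DB_t=h_t$ is a constant (non-random) element of $\mathscr H$, its own Malliavin derivative vanishes, so $D^lB_t=0$ for all $l\geq2$; this is consistent with (\ref{eq:deriv_decomp_L2norm}), since $J_nB_t=0$ for $n\geq2$ forces the left-hand side to vanish for $l\geq2$. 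In summary, there is no substantial obstacle here: once the variance identity guarantees $h_t\in\mathscr H$, the entire statement follows from the linearity of $B_t$ in the driving Brownian path.
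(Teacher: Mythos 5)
Your proposal is correct, and it is worth noting that the paper does not actually prove this lemma: it is stated as a consequence of the transfer principle (Proposition 5.2.1 in Nualart's book) together with a citation to the authors' earlier paper, so your direct argument is in fact more self-contained than what appears here. The substance is the same in both routes: $B_t$ lies in the first Wiener chaos of the underlying Brownian motion, $B_t=[h_t]$ with $h_t(s)=\int_0^{s\wedge t}K(t,u)\,du$, whence $DB_t=h_t$ is deterministic and all higher derivatives vanish. Two small remarks. First, your justification of $\int_0^tK(t,s)^2\,ds<\infty$ via the It\^o isometry is stated slightly backwards: one needs $K(t,\cdot)\in L^2([0,t])$ \emph{before} the It\^o integral in (\ref{eq:integral_rep}) and the isometry are available. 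Since the paper takes the Decreusefond--{\"U}st{\"u}nel representation as given (the integral is declared to be a well-defined It\^o integral), square-integrability of the kernel is part of the standing hypotheses and the isometry then yields the value $t^{2H}$; your conclusion is right, but the logical order should be reversed or the finiteness simply taken from the setup. Second, be aware that the paper uses two identifications of $DB_t$ interchangeably: the lemma states the Cameron--Martin element $s\mapsto\int_0^{s\wedge t}K(t,u)\,du$ (which is what you derive), while the proof of Theorem \ref{thm:Th4-2} writes the density $DB_t(s)=K(t,s)1_{[0,t]}(s)$; these are the same element of $\mathscr{H}$, and your computation covers both.
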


For every $t>0$, and $m=1,2,\cdots$, define
\begin{equation}
B_{t}^{(m)}(\omega):=\sum_{i=0}^{2^{m}-1}\frac{2^{m}}{t}\int_{\frac{i}{2^{m}}t}^{\frac{i+1}{2^{m}}t}K(t,r)dr\left(\omega_{\frac{i+1}{2^{m}}t}-\omega_{\frac{i}{2^{m}}t}\right),\label{eq:11}
\end{equation}
where $B_{0}^{(m)}=0$. Obviously $B_{t}^{(m)}\in\mathbb{D}_{r}^{p}$
and 
\begin{equation}
DB_{t}^{(m)}(s)=u_{t}^{(m)}=\sum_{i=0}^{2^{m}-1}\frac{2^{m}}{t}\int_{\frac{i}{2^{m}}t}^{\frac{i+1}{2^{m}}t}K(t,r)dr\mathds{1}_{\left(\frac{i}{2^{m}}t,\frac{i+1}{2^{m}}t\right]}(s).\label{eq:10}
\end{equation}
Its higher order Malliavin derivatives vanish identically. 

The first part of Theorem \ref{thm:main LDP} is a
consequence of the following result.
\begin{thm}
\label{thm:Th4-2}Let $H\in\left[\frac{1}{2},1\right)$. For all $r\in\mathbb{N}$, $1<p<\infty$ and $t\in[0,1]$,
$(B_{t}^{(m)})_{m\in\mathbb{N}}$ converges $(p,r)$-quasi-surely
to some limit, denoted by $B_{t}$ too, which is also the limit of
$(B_{t}^{(m)})_{m\in\mathbb{N}}$ in $\mathbb{D}_{r}^{p}$.
\end{thm}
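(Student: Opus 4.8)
The plan is to establish the two convergences — in $\mathbb{D}_r^p$ and $(p,r)$-quasi-surely — to a common limit, the first requiring only a qualitative $L^2$ statement and the second a summable geometric rate. The key structural observation is that each $B_t^{(m)}$ from (\ref{eq:11}), being a finite linear combination of Brownian increments, lies in the first Wiener chaos $\mathcal{H}_1$, and so does every difference $B_t^{(m)}-B_t^{(m')}$. On $\mathcal{H}_1$ the full Sobolev norm is controlled by the $L^2$ norm: Proposition \ref{prop:Lp to L2} with $n=1$ gives $\lVert F\rVert_p\le C_p\lVert F\rVert_2$, while such an $F$ has a deterministic Malliavin derivative with $\lVert DF\rVert_{\mathscr{H}}=\lVert F\rVert_2$ and vanishing higher derivatives (cf.\ the remark following (\ref{eq:10})); hence $\lVert F\rVert_{\mathbb{D}_r^p}\le C_{p,r}\lVert F\rVert_2$ uniformly in $r$. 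This reduces every estimate below to the single scalar $\lVert F\rVert_2$.

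I would then transfer to the kernel via the It\^{o} isometry. With $u_t^{(m)}$ the step function in (\ref{eq:10}), one has $\lVert B_t^{(m)}-B_t^{(m')}\rVert_2=\lVert u_t^{(m)}-u_t^{(m')}\rVert_{L^2([0,t])}$ and $\lVert B_t^{(m)}-B_t\rVert_2=\lVert u_t^{(m)}-K(t,\cdot)\rVert_{L^2([0,t])}$, where $B_t=\int_0^tK(t,s)d\omega(s)$ is the It\^{o} integral. Since $u_t^{(m)}$ is exactly the average of $K(t,\cdot)$ over the level-$m$ dyadic cells of $[0,t]$, the sequence $(u_t^{(m)})_m$ is an $L^2$-bounded martingale, and because $K(t,\cdot)\in L^2([0,t])$ for $H<1$, martingale convergence yields $u_t^{(m)}\to K(t,\cdot)$ in $L^2$. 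Combined with the first paragraph this gives $B_t^{(m)}\to B_t$ in $\mathbb{D}_r^p$, which is the $\mathbb{D}_r^p$ half of the assertion and identifies the limit as the It\^{o} integral.

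For the quasi-sure convergence the plan is to invoke the capacitary Chebyshev inequality and the Borel--Cantelli lemma for capacities, which need a summable rate. I would establish $\lVert u_t^{(m)}-K(t,\cdot)\rVert_{L^2}\le C_t\,2^{-m(1-H)}$, so that $\lVert B_t^{(m+1)}-B_t^{(m)}\rVert_{\mathbb{D}_r^p}\le C\,2^{-m(1-H)}$ by the previous paragraphs. Taking the threshold $\lambda_m=2^{-m(1-H)/2}$ and noting that $|B_t^{(m+1)}-B_t^{(m)}|$ is continuous, hence lower semi-continuous, on $\W$, the capacitary Chebyshev inequality gives $c_{p,r}(|B_t^{(m+1)}-B_t^{(m)}|>\lambda_m)\le C\,2^{-m(1-H)/2}$, which is summable since $H<1$. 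Borel--Cantelli then shows that, off a $c_{p,r}$-null set, $|B_t^{(m+1)}-B_t^{(m)}|\le\lambda_m$ for all large $m$, so $(B_t^{(m)}(\omega))_m$ is Cauchy and converges for $(p,r)$-quasi-every $\omega$; since $c_{p,r}$ dominates $\mathbb{P}$, this quasi-sure limit agrees a.e.\ with the $\mathbb{D}_r^p$ limit, and I label it $B_t$ as well.

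The genuine obstacle is the rate $\lVert u_t^{(m)}-K(t,\cdot)\rVert_{L^2}\lesssim 2^{-m(1-H)}$, since $K(t,\cdot)$ is singular: from (\ref{eq:K_repn}) one reads off $K(t,s)\sim c_H\frac{t^{2H-1}}{2H-1}s^{1/2-H}$ as $s\to0^+$, so no uniform H\"older bound in $s$ is available and $\partial_sK$ is unbounded near the endpoints. I would split $\lVert u_t^{(m)}-K(t,\cdot)\rVert_{L^2}^2=\sum_i\int_{I_i}|K(t,\cdot)-\mathrm{avg}_{I_i}K|^2$, handle the first cell $(0,t2^{-m}]$ directly from the power-law asymptotics (already of order $2^{-m(2-2H)}$), and bound the oscillation on the remaining cells by the $s$-regularity of $K$ away from $0$, where the resulting series $\sum_i i^{-(2H+1)}$ converges for $H>0$ and reproduces the same order. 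The restriction to $H\ge\tfrac12$ is used here (at $H=\tfrac12$ the kernel is $\mathds{1}_{[0,t]}$ and the approximation is exact), and the delicate point flagged in the strategy outline is precisely controlling these $s$-increments of the singular kernel, which one may trade against the better-behaved $t$-increments of $K$.
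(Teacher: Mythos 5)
Your overall architecture is sound and in places cleaner than the paper's: the reduction of the $\mathbb{D}_r^p$-norm of a first-chaos element to its $L^2$-norm is correct, the identification of $u_t^{(m)}$ in (\ref{eq:10}) as the dyadic conditional expectation of $K(t,\cdot)$ gives the qualitative $L^2$-convergence (hence the $\mathbb{D}_r^p$ half of the theorem) very economically, and your claimed rate $\lVert u_t^{(m)}-K(t,\cdot)\rVert_{L^2}\lesssim 2^{-m(1-H)}$ is in fact correct. But two steps do not survive scrutiny as written. First, you apply the capacitary Chebyshev inequality to $\varphi=\vert B_t^{(m+1)}-B_t^{(m)}\vert$; that inequality requires $\varphi\in\mathbb{D}_r^p$, and the absolute value of a nondegenerate first-chaos element is not in $\mathbb{D}_r^p$ for $r\geq2$ (its second Malliavin derivative is a distribution supported on $\{F=0\}$), so lower semi-continuity alone does not suffice. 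The paper circumvents this by applying Chebyshev to the event $\{F^2>\lambda^2\}$ with the smooth polynomial functional $F^2\in\mathcal{P}_2$, whose $\mathbb{D}_r^p$-norm is controlled by $\lVert F\rVert_2^2$ via Proposition \ref{prop:Lp to L2}; your argument needs the same (easy) modification.

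Second, and more seriously, the quantitative estimate carrying the whole quasi-sure half of the theorem is asserted rather than proved, and the route you sketch for it is incomplete. You treat the first dyadic cell using $K(t,s)\sim Cs^{\frac{1}{2}-H}$ and propose to bound the oscillation on the remaining cells by the ``$s$-regularity of $K$ away from $0$'', producing only the series $\sum_i i^{-(2H+1)}$ that comes from the $s^{-\frac{1}{2}-H}$ singularity of $\partial_sK$ at $s=0$. But $K(t,\cdot)$ is not regular away from $0$: since $K(t,s)\asymp(t-s)^{H-\frac{1}{2}}$ as $s\uparrow t$, one has $\partial_sK(t,s)\asymp(t-s)^{H-\frac{3}{2}}$, which blows up at the right endpoint, and the cells near $s=t$ contribute a separate series $\sum_j j^{2H-3}$ of total order $h^{2H}$. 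That contribution happens to be dominated by $h^{2-2H}$ for $H\in\left[\frac{1}{2},1\right)$, so your stated rate survives, but this is precisely the analysis you have omitted --- and it is the part of the theorem on which the paper spends Steps 2--3 of its proof. The paper deliberately avoids differentiating $K$ in $s$ at all: it sandwiches $K(t,s+\alpha)-K(t,s)$ between $t$-shifted kernels via (\ref{eq:diff_upp_bd}) and (\ref{eq:diff_lower_bd}) and integrates, at the price of the slightly weaker bound $\lVert B_t^{(m+1)}-B_t^{(m)}\rVert_2^2\lesssim 2^{-m(2-2H)}+2^{-m(2H-1)}$, whose second term forces the constraint $\delta<(1-H)\wedge\left(H-\frac{1}{2}\right)$. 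Either carry out the two-endpoint oscillation estimate honestly, or import the paper's shift-comparison argument; as it stands the central inequality is a placeholder.
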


\begin{proof}
The proof is quite technical and will be divided into several steps. When $H=\frac{1}{2}$, an fBM is a standard Brownian motion, and hence the result follows immediately. We only need to consider the case when $H>\frac{1}{2}$. Let us begin with a simple fact that 
\[
\left\{ \omega:(B_{t}^{(m)}(\omega))_{m\in \mathbb{N}}\text{ is not Cauchy}\right\} \subset\limsup_{m\to\infty}\left\{ \omega:\left\vert B_{t}^{(m+1)}(\omega)-B_{t}^{(m)}(\omega)\right\vert >\frac{1}{2^{m\delta}}\right\} 
\]
for some $\delta>0$. Therefore, by the first Borel-Cantelli lemma for capacities, we only
need to show that 
\[
\sum_{m=1}^{\infty}c_{p,r}\left( \left|B_{t}^{(m+1)}-B_{t}^{(m)}\right|>\frac{1}{2^{m\delta}}\right) <\infty
\]
for all $p\in(1,\infty)$ and $r\in\mathbb{N}$. Since $c_{p,r}$
is increasing in $p$ and $r$, it suffices to prove that the above
infinite sum is finite for $p>2$ and all $r\in\mathbb{N}$. Therefore,
we shall assume that $p>2$ in the sequel. 

\textbf{Step 1.} In this step, we convert our problem from estimating the
capacities to estimating the $L^{2}$-norm of Gaussian random variables.
By Chebyshev's inequality, we have
\begin{align}
c_{p,r}\left(\left\vert B_{t}^{(m+1)}-B_{t}^{(m)}\right\vert >\lambda\right) & =c_{p,r}\left(\left\vert B_{t}^{(m+1)}-B_{t}^{(m)}\right\vert ^{2}>\lambda^{2}\right)\nonumber \\
 & \leq\lambda^{-2}\left\Vert \left(B_{t}^{(m+1)}-B_{t}^{(m)}\right)^{2}\right\Vert _{\mathbb{D}_{r}^{p}}\label{eq:6}
\end{align}
for any $\lambda>0$. Since $\left(B_{t}^{(m+1)}-B_{t}^{(m)}\right)^{2}$
is a polynomial functional of degree $2$, and 
\[
D\left(B_{t}^{(m+1)}-B_{t}^{(m)}\right)=u_{t}^{(m+1)}-u_{t}^{(m)},
\]
where $u_{t}^{(m)}$ is defined as in (\ref{eq:10}), so for all $l\geq3$, 
\[D^{l}\left(\left(B_{t}^{(m+1)}-B_{t}^{(m)}\right)^{2}\right)=0.\]
Therefore, by (\ref{prop:Lp to L2}),
\begin{align*}
\left\Vert \left\Vert D^{l}\left(\left(B_{t}^{(m+1)}-B_{t}^{(m)}\right)^{2}\right)\right\Vert _{\mathscr{H}^{\otimes l}}\right\Vert _{p} & \leq3(p-1)\left\Vert \left\Vert D^{l}\left(\left(B_{t}^{(m+1)}-B_{t}^{(m)}\right)^{2}\right)\right\Vert _{\mathscr{H}^{\otimes l}}\right\Vert _{2}\\
 & \leq3(p-1)2^{\frac{l}{2}}\left\Vert \left(B_{t}^{(m+1)}-B_{t}^{(m)}\right)^{2}\right\Vert _{2}\\
 & =3(p-1)2^{\frac{l}{2}}\left\Vert B_{t}^{(m+1)}-B_{t}^{(m)}\right\Vert _{4}^{2}\\
 & \leq3(p-1)2^{\frac{l}{2}}\left(2\sqrt{3}\left\Vert B_{t}^{(m+1)}-B_{t}^{(m)}\right\Vert _{2}\right)^{2}\\
 & =36(p-1)2^{\frac{l}{2}}\left\Vert B_{t}^{(m+1)}-B_{t}^{(m)}\right\Vert _{2}^{2}
\end{align*}
for all $p>2$ and $0\leq l\leq r$, and therefore
\begin{align}
\left\Vert \left(B_{t}^{(m+1)}-B_{t}^{(m)}\right)^{2}\right\Vert _{\mathbb{D}_{r}^{p}} & \leq\sum_{l=0}^{r}\left\Vert \left\Vert D^{l}\left(B_{t}^{(m+1)}-B_{t}^{(m)}\right)\right\Vert _{\mathscr{H}^{\otimes l}}\right\Vert _{p}\nonumber \\
 & \leq C_{r,p}\left\Vert B_{t}^{(m+1)}-B_{t}^{(m)}\right\Vert _{2}^{2},\label{eq:7}
\end{align}
where 
\[C_{r,p}=36(r+1)(p-1)2^{\frac{r}{2}}\]
depends only on $r$ and $p$. The $L^{2}$-norm on the right-hand side of (\ref{eq:7}) can be handled as the following. By definition (\ref{eq:11}),
\begin{align*}
B_{t}^{(m+1)}(\omega)-B_{t}^{(m)}(\omega) & =\sum_{i=0}^{2^{m+1}-1}\frac{2^{m+1}}{t}\int_{\frac{i}{2^{m+1}}t}^{\frac{i+1}{2^{m+1}}t}K(t,r)dr\left(\omega_{\frac{i+1}{2^{m+1}}t}-\omega_{\frac{i}{2^{m+1}}t}\right)\\
 & \hphantom{=}-\sum_{i=0}^{2^{m}-1}\frac{2^{m}}{t}\int_{\frac{i}{2^{m}}t}^{\frac{i+1}{2^{m}}t}K(t,r)dr\left(\omega_{\frac{i+1}{2^{m}}t}-\omega_{\frac{i}{2^{m}}t}\right).
\end{align*}
The integral term in $B_{t}^{(m)}(\omega)$ may be split into two
parts, i.e. for each $i\in\{0,1,\cdots,2^{m}-1\}$, 
\begin{align*}
 & \hphantom{=\ \ }\frac{2^{m}}{t}\int_{\frac{i}{2^{m}}t}^{\frac{i+1}{2^{m}}t}K(t,r)dr\left(\omega_{\frac{i+1}{2^{m}}t}-\omega_{\frac{i}{2^{m}}t}\right)\\
 & =\frac{2^{m}}{t}\left[\int_{\frac{2i+1}{2^{m+1}}t}^{\frac{2i+2}{2^{m+1}}t}K(t,r)dr\left(\omega_{\frac{2i+2}{2^{m+1}}t}-\omega_{\frac{2i}{2^{m+1}}t}\right)+\int_{\frac{2i}{2^{m+1}}t}^{\frac{2i+1}{2^{m+1}}t}K(t,r)dr\left(\omega_{\frac{2i+2}{2^{m+1}}t}-\omega_{\frac{2i}{2^{m+1}}t}\right)\right],
\end{align*}
and the contribution from the interval $\left(\frac{i}{2^{m}}t,\frac{i+1}{2^{m}}t\right]$
in $B_{t}^{(m+1)}(\omega)$ is 
\[
\frac{2^{m+1}}{t}\left[\int_{\frac{2i+1}{2^{m+1}}t}^{\frac{2i+2}{2^{m+1}}t}K(t,r)dr\left(\omega_{\frac{2i+2}{2^{m+1}}t}-\omega_{\frac{2i+1}{2^{m+1}}t}\right)+\int_{\frac{2i}{2^{m+1}}t}^{\frac{2i+1}{2^{m+1}}t}K(t,r)dr\left(\omega_{\frac{2i+1}{2^{m+1}}t}-\omega_{\frac{2i}{2^{m+1}}t}\right)\right].
\]
Therefore, we deduce that 
\begin{align}
 & \hphantom{=\ \ }B_{t}^{(m+1)}(\omega)-B_{t}^{(m)}(\omega)\nonumber \\
 & =\frac{2^{m}}{t}\sum_{i=0}^{2^{m}-1}\left[\int_{\frac{2i+1}{2^{m+1}}t}^{\frac{2i+2}{2^{m+1}}t}K(t,r)dr\left(\omega_{\frac{2i+2}{2^{m+1}}t}-2\omega_{\frac{2i+1}{2^{m+1}}t}+\omega_{\frac{2i}{2^{m+1}}t}\right)\right.\nonumber \\
 & \hphantom{=}\left.+\int_{\frac{2i}{2^{m+1}}t}^{\frac{2i+1}{2^{m+1}}t}K(t,r)dr\left(2\omega_{\frac{2i+1}{2^{m+1}}t}-\omega_{\frac{2i}{2^{m+1}}t}-\omega_{\frac{2i+2}{2^{m+1}}t}\right)\right]\nonumber \\
 & =\frac{2^{m}}{t}\sum_{i=0}^{2^{m}-1}\left(\int_{\frac{2i+1}{2^{m+1}}t}^{\frac{2i+2}{2^{m+1}}t}K(t,r)dr-\int_{\frac{2i}{2^{m+1}}t}^{\frac{2i+1}{2^{m+1}}t}K(t,r)dr\right)\left(\omega_{\frac{2i+2}{2^{m+1}}t}-2\omega_{\frac{2i+1}{2^{m+1}}t}+\omega_{\frac{2i}{2^{m+1}}t}\right).\label{eq:1}
\end{align}
Since Brownian motion has independent increments, we have 
\begin{equation}
\left\Vert B_{t}^{(m+1)}-B_{t}^{(m)}\right\Vert _{2}^{2}\nonumber =\left(\frac{2^{m}}{t}\right)\sum_{i=0}^{2^{m}-1}\left(\int_{\frac{2i+1}{2^{m+1}}t}^{\frac{2i+2}{2^{m+1}}t}K(t,r)dr-\int_{\frac{2i}{2^{m+1}}t}^{\frac{2i+1}{2^{m+1}}t}K(t,r)dr\right)^{2}.\label{eq:5}
\end{equation}
\textbf{Step 2.} In this step, we further simplify our problem using
a rather simple observation. By change of variables, for each $i\in\left\{ 0,\cdots,2^{m}-1\right\} $,
\begin{align*}
M_{i} & :=\int_{\frac{2i+1}{2^{m+1}}t}^{\frac{2i+2}{2^{m+1}}t}K(t,r)dr-\int_{\frac{2i}{2^{m+1}}t}^{\frac{2i+1}{2^{m+1}}t}K(t,r)dr\\
 & =\int_{\frac{2i}{2^{m+1}}t}^{\frac{2i+1}{2^{m+1}}t}K\left(t,s+\frac{t}{2^{m+1}}\right)-K(t,s)ds.
\end{align*}
Using the definition of $K$ and change of variables, we observe that for all $\alpha\in(0,t-s)$,

\begin{align*}
K(t,s+\alpha) & =c_{H}(s+\alpha)^{\frac{1}{2}-H}\int_{s+\alpha}^{t}(u-s-\alpha)^{H-\frac{3}{2}}u^{H-\frac{1}{2}}du\\
 & =c_{H}(s+\alpha)^{\frac{1}{2}-H}\int_{s}^{t-\alpha}(v-s)^{H-\frac{3}{2}}v^{H-\frac{1}{2}}\left(\frac{v+\alpha}{v}\right)^{H-\frac{1}{2}}dv\\
 & \leq c_{H}s^{\frac{1}{2}-H}\int_{s}^{t-\alpha}(v-s)^{H-\frac{3}{2}}v^{H-\frac{1}{2}}dv\\
\vphantom{\int_{a}^{b}} & =K(t-\alpha,s),
\end{align*}
and hence 
\begin{equation}
K(t,s+\alpha)-K(t,s)\leq K(t-\alpha,s)-K(t,s).\label{eq:diff_upp_bd}
\end{equation}
On the other hand, for every $\alpha\in(0,r)$, 
\begin{align*}
K(t,r-\alpha) & =c_{H}(r-\alpha)^{\frac{1}{2}-H}\int_{r-\alpha}^{t}(u-r+\alpha)^{H-\frac{3}{2}}u^{H-\frac{1}{2}}du\\
 & \leq c_{H}(r-\alpha)^{\frac{1}{2}-H}\int_{r}^{t+\alpha}(v-r)^{H-\frac{3}{2}}v^{H-\frac{1}{2}}dv\\
 & =\left(\frac{r}{r-\alpha}\right)^{H-\frac{1}{2}}K(t+\alpha,r).
\end{align*}
By setting $r=s+\alpha$, we deduce that
\begin{align}
K(t,s+\alpha)-K(t,s) & =K(t,s+\alpha)-K(t,s+\alpha-\alpha)\nonumber \\
 & \geq K(t,s+\alpha)-\left(\frac{s+\alpha}{s}\right)^{H-\frac{1}{2}}K(t+\alpha,s+\alpha).\label{eq:diff_lower_bd}
\end{align}
Now let $\alpha=\frac{t}{2^{m+1}}$ in (\ref{eq:diff_upp_bd}) and
(\ref{eq:diff_lower_bd}), and for all $s<t-\frac{t}{2^{m+1}}$, define
\begin{align*}
L_{i} & =\int_{\frac{2i}{2^{m+1}}t}^{\frac{2i+1}{2^{m+1}}t}K\left(t,s+\frac{t}{2^{m+1}}\right)-\left(\frac{s+\frac{t}{2^{m+1}}}{s}\right)^{H-\frac{1}{2}}K\left(t+\frac{t}{2^{m+1}},s+\frac{t}{2^{m+1}}\right)ds\\
 & =\int_{\frac{2i+1}{2^{m+1}}t}^{\frac{2i+2}{2^{m+1}}t}K(t,r)-\left(\frac{r}{r-\frac{t}{2^{m+1}}}\right)^{H-\frac{1}{2}}K\left(t+\frac{t}{2^{m+1}},r\right)dr,
\end{align*}
and 
\begin{align*}
U_{i}:= & \int_{\frac{2i}{2^{m+1}}t}^{\frac{2i+1}{2^{m+1}}t}K\left(t-\frac{t}{2^{m+1}},s\right)-K(t,s)ds.
\end{align*}
Then $L_{i}\leq M_{i}\leq U_{i}$ for each $i$, and it thus follows
that $M_{i}^{2}\leq L_{i}^{2}\vee U_{i}^{2}$, which implies that
\begin{align}
\left\Vert B_{t}^{(m+1)}-B_{t}^{(m)}\right\Vert _{2}^{2} & =\left(\frac{2^{m}}{t}\right)\sum_{i=0}^{2^{m}-1}M_{i}^{2}\nonumber \\
 & \leq\left(\left(\frac{2^{m}}{t}\right)\sum_{i=0}^{2^{m}-1}L_{i}^{2}\right)\vee\left(\left(\frac{2^{m}}{t}\right)\sum_{i=0}^{2^{m}-1}U_{i}^{2}\right).\label{eq:L2_bd}
\end{align}

\textbf{Step 3.} In this step, we find upper bounds for $L_{i}^{2}$
and $U_{i}^{2}$, respectively. We first find a control of
\[
\left(\frac{2^{m}}{t}\right)\sum_{i=0}^{2^{m}-1}L_{i}^{2}=\left(\frac{2^{m}}{t}\right)\sum_{i=0}^{2^{m}-1}\left(\int_{\frac{2i+1}{2^{m+1}}t}^{\frac{2i+2}{2^{m+1}}t}K(t,r)-\left(\frac{r}{r-\frac{t}{2^{m+1}}}\right)^{H-\frac{1}{2}}K\left(t+\frac{t}{2^{m+1}},r\right)dr\right)^{2}.
\]
For all $r\in\left(\frac{t}{2^{m+1}},t\right)$, consider the function

\begin{align*}
f_{r}(x)= & \left(\frac{r}{r-x}\right)^{H-\frac{1}{2}}K(t+x,r),\quad0\leq x<r.
\end{align*}
Then $f_{r}(0)=K(t,r)$ and 
\[
f_{r}\left(\frac{t}{2^{m+1}}\right)=r^{H-\frac{1}{2}}\left(r-\frac{t}{2^{m+1}}\right)^{\frac{1}{2}-H}K\left(t+\frac{t}{2^{m+1}},r\right).
\]
Therefore,
\[
L_{i}=\int_{\frac{2i+1}{2^{m+1}}t}^{\frac{2i+2}{2^{m+1}}t}\int_{\frac{t}{2^{m+1}}}^{0}f_{r}'(x)dxdr.
\]
We may compute the derivative of $f_{r}$, which is
\begin{align*}
f_{r}'(x) & =r^{H-\frac{1}{2}}\left[\left(H-\frac{1}{2}\right)(r-x)^{-\frac{1}{2}-H}K(t+x,r)+(r-x)^{\frac{1}{2}-H}\partial_{1}K(t+x,r)\right]\\
 & =\left(H-\frac{1}{2}\right)r^{H-\frac{1}{2}}(r-x)^{-\frac{1}{2}-H}K(t+x,r)+c_{H}(r-x)^{\frac{1}{2}-H}\left(t+x\right)^{H-\frac{1}{2}}(t+x-r)^{H-\frac{3}{2}},
\end{align*}
where $\partial_{1}K(t,s)$ denotes the partial derivative of $K$
with respect to the first variable. Denote 
\[
g_{r}(x)=\left(H-\frac{1}{2}\right)r^{H-\frac{1}{2}}(r-x)^{-\frac{1}{2}-H}K(t+x,r)
\]
and 
\[
h_{r}(x)=c_{H}(r-x)^{\frac{1}{2}-H}\left(t+x\right)^{H-\frac{1}{2}}(t+x-r)^{H-\frac{3}{2}}.
\]
Then for all $x<r$, $g_{r}(x)\geq0$ and $h_{r}(x)\geq0$. By H\"{o}lder's
inequality, we obtain that
\begin{align*}
L_{i}^{2} 
 & \leq\frac{t}{2^{m+1}}\int_{\frac{2i+1}{2^{m+1}}t}^{\frac{2i+2}{2^{m+1}}t}\left(\int_{0}^{\frac{t}{2^{m+1}}}g_{r}(x)dx+\int_{0}^{\frac{t}{2^{m+1}}}h_{r}(x)dx\right)^{2}dr\\
 & \leq\frac{t}{2^{m}}\int_{\frac{2i+1}{2^{m+1}}t}^{\frac{2i+2}{2^{m+1}}t}\left(\int_{0}^{\frac{t}{2^{m+1}}}g_{r}(x)dx\right)^{2}+\left(\int_{0}^{\frac{t}{2^{m+1}}}h_{r}(x)dx\right)^{2}dr,
\end{align*}
and hence 
\begin{align}
\left(\frac{2^{m}}{t}\right)\sum_{i=0}^{2^{m}-1}L_{i}^{2}\leq & \int_{\frac{t}{2^{m+1}}}^{t}\left(\int_{0}^{\frac{t}{2^{m+1}}}g_{r}(x)dx\right)^{2}+\left(\int_{0}^{\frac{t}{2^{m+1}}}h_{r}(x)dx\right)^{2}dr.\label{eq:g+h_bound}
\end{align}
We control the integral of $g_{r}$ first. When $H>\frac{1}{2}$,
since $t\leq1$, 
we have $
K(t,s)s^{H-\frac{1}{2}}  \leq c_{H},
$ 
i.e. $K(t,s)\leq c_{H}s^{\frac{1}{2}-H}$. Therefore, we deduce that
\begin{align*}
0\leq\int_{0}^{\frac{t}{2^{m+1}}}g_{r}(x)dx & =\left(H-\frac{1}{2}\right)\int_{0}^{\frac{t}{2^{m+1}}}r^{H-\frac{1}{2}}(r-x)^{-\frac{1}{2}-H}K(t+x,r)dx\\
 & \leq c_{H}\left(H-\frac{1}{2}\right)\int_{0}^{\frac{t}{2^{m+1}}}(r-x)^{-\frac{1}{2}-H}dx\\
 & =c_{H}\left(\left(r-\frac{t}{2^{m+1}}\right)^{\frac{1}{2}-H}-r^{\frac{1}{2}-H}\right).
\end{align*}
As $\left(r-\frac{t}{2^{m+1}}\right)^{\frac{1}{2}-H}\geq r^{\frac{1}{2}-H}$,
it follows that
\begin{align*}
\left(\left(r-\frac{t}{2^{m+1}}\right)^{\frac{1}{2}-H}-r^{\frac{1}{2}-H}\right)^{2}  \leq\left(r-\frac{t}{2^{m+1}}\right)^{1-2H}-r^{1-2H}.
\end{align*}
Consequently,
\begin{align}
\left(\int_{0}^{\frac{t}{2^{m+1}}}g_{r}(x)dx\right)^{2} & \leq C_{1}\left[\left(r-\frac{t}{2^{m+1}}\right)^{1-2H}-r^{1-2H}\right],\label{eq:g_bound}
\end{align}
where $C_{1}$ is a constant depending only on $H$. As for $h_{r}$,
due to change of variables, 
\begin{align*}
0\leq\int_{0}^{\frac{t}{2^{m+1}}}h_{r}(x)dx & =c_{H}\int_{0}^{\frac{t}{2^{m+1}}}(r-x)^{\frac{1}{2}-H}\left(t+x\right)^{H-\frac{1}{2}}(t+x-r)^{H-\frac{3}{2}}dx\vphantom{\left(\frac{t}{2^{m+1}}\right)^{H-\frac{1}{2}}}\\
\vphantom{\left(\frac{t}{2^{m+1}}\right)^{H-\frac{1}{2}}} & \leq c_{H}\left(2t\right)^{H-\frac{1}{2}}\int_{0}^{\frac{t}{2^{m+1}}}(r-x)^{\frac{1}{2}-H}(t+x-r)^{H-\frac{3}{2}}dx\\
\vphantom{\left(\frac{t}{2^{m+1}}\right)^{H-\frac{1}{2}}} & =c_{H}\left(2t\right)^{H-\frac{1}{2}}\int_{r-\frac{t}{2^{m+1}}}^{r}y^{\frac{1}{2}-H}(t-y)^{H-\frac{3}{2}}dy\\
 & \leq\frac{2c_{H}}{H-\frac{1}{2}}\left(r-\frac{t}{2^{m+1}}\right)^{\frac{1}{2}-H}\left(\left(t+\frac{t}{2^{m+1}}-r\right)^{H-\frac{1}{2}}-\left(t-r\right)^{H-\frac{1}{2}}\right)\\
 & \leq\frac{2c_{H}}{H-\frac{1}{2}}\left(r-\frac{t}{2^{m+1}}\right)^{\frac{1}{2}-H}\left(\frac{t}{2^{m+1}}\right)^{H-\frac{1}{2}},
\end{align*}
which implies that
\begin{align}
\left(\int_{0}^{\frac{t}{2^{m+1}}}h_{r}(x)dx\right)^{2} & \leq C_{2}\left(\frac{t}{2^{m+1}}\right)^{2H-1}\left(r-\frac{t}{2^{m+1}}\right)^{1-2H},\label{eq:h_bound}
\end{align}
with $C_{2}$ some constant only depending on the value of $H$. Using
(\ref{eq:g+h_bound}), we get that
\begin{align}
\left(\frac{2^{m}}{t}\right)\sum_{i=0}^{2^{m}-1}L_{i}^{2} & \leq C_{1}\int_{\frac{t}{2^{m+1}}}^{t}\left(r-\frac{t}{2^{m+1}}\right)^{1-2H}-r^{1-2H}dr\nonumber \\
 & \hphantom{=}+C_{2}\left(\frac{t}{2^{m+1}}\right)^{2H-1}\int_{\frac{t}{2^{m+1}}}^{t}\left(r-\frac{t}{2^{m+1}}\right)^{1-2H}dr\nonumber \\
 & =C_{1}\frac{1}{2-2H}\left(\left(t-\frac{t}{2^{m+1}}\right)^{2-2H}-t^{2-2H}+\left(\frac{t}{2^{m+1}}\right)^{2-2H}\right)\nonumber \\
 & \hphantom{=}+C_{2}\left(\frac{t}{2^{m+1}}\right)^{2H-1}\frac{1}{2-2H}\left(t-\frac{t}{2^{m+1}}\right)^{2-2H}\nonumber \\
 & \leq c_{1}\left(\frac{t}{2^{m+1}}\right)^{2-2H}+c_{2}\left(\frac{t}{2^{m+1}}\right)^{2H-1},\label{eq:lower_bd}
\end{align}
where $c_{1}$ and $c_{2}$ are two positive constants. 

Next, we move onto the estimate for $U_{i}$'s. By the definition
of $U_{i}$'s and H\"{o}lder's inequality, we have that

\begin{align}
  \hphantom{=\ \ }\left(\frac{2^{m}}{t}\right)\sum_{i=0}^{2^{m}-1}U_{i}^{2}\nonumber 
 & \leq\left(\frac{2^{m}}{t}\right)\sum_{i=0}^{2^{m}-1}\left(\frac{t}{2^{m+1}}\right)\int_{\frac{2i}{2^{m+1}}t}^{\frac{2i+1}{2^{m+1}}t}\left(K(t-\frac{t}{2^{m+1}},s)-K(t,s)\right)^{2}ds\nonumber \\
 & \leq\frac{1}{2}\int_{0}^{t-\frac{t}{2^{m+1}}}K^{2}(t-\frac{t}{2^{m+1}},s)ds-\int_{0}^{t-\frac{t}{2^{m+1}}}K(t-\frac{t}{2^{m+1}},s)K(t,s)ds\nonumber \\
\vphantom{\left(\frac{t}{2^{m+1}}\right)} & \hphantom{=}+\frac{1}{2}\int_{0}^{t}K^{2}(t,s)ds\nonumber \\
 & =\frac{1}{2}\left(\frac{t}{2^{m+1}}\right)^{2H}.\label{eq:upper_bd}
\end{align}

\textbf{Step 4.} In this step, we complete our proof using the above estimates. It follows from (\ref{eq:L2_bd}), (\ref{eq:lower_bd}) and
(\ref{eq:upper_bd}) that
\begin{align*}
\left\Vert B_{t}^{(m+1)}-B_{t}^{(m)}\right\Vert _{2}^{2} & \leq\left(c_{1}\left(\frac{t}{2^{m+1}}\right)^{2-2H}+c_{2}\left(\frac{t}{2^{m+1}}\right)^{2H-1}\right)\vee\frac{1}{2}\left(\frac{t}{2^{m+1}}\right)^{2H}\\
 & \leq c_{3}\left(\frac{t}{2^{m+1}}\right)^{2-2H}+c_{4}\left(\frac{t}{2^{m+1}}\right)^{2H-1},
\end{align*}
 where $c_{3}$ and $c_{4}$ are some constants. 

Therefore, for any $\lambda>0$, it holds that 
\begin{align*}
c_{p,r}\left(\left\vert B_{t}^{(m+1)}-B_{t}^{(m)}\right\vert >\lambda\right) & \leq C_{r,p}\lambda^{-2}\left\Vert B_{t}^{(m+1)}-B_{t}^{(m)}\right\Vert _{2}^{2}\vphantom{\left(\frac{t}{2^{m+1}}\right)^{2H-1}}\\
 & \leq C_{r,p}\lambda^{-2}\left(c_{3}\left(\frac{t}{2^{m+1}}\right)^{2-2H}+c_{4}\left(\frac{t}{2^{m+1}}\right)^{2H-1}\right).
\end{align*}
Set $\lambda=2^{-m\delta}$, then as $t\leq1$,
\begin{align*}
c_{p,r}\left(\left\vert B_{t}^{(m+1)}-B_{t}^{(m)}\right\vert >\frac{1}{2^{m\delta}}\right) & \leq C_{r,p,H}\left(\frac{1}{2^{2m(1-H-\delta)}}+\frac{1}{2^{2m(H-\frac{1}{2}-\delta)}}\right),
\end{align*}
where $C_{r,p,H}$ is a suitable constant depending only on $r$, $p$
and $H$. Hence, if we choose $\delta$ small enough such that
$0<\delta<(1-H)\wedge\left(H-\frac{1}{2}\right)$, then 
\[
\sum_{m=1}^{\infty}c_{p,r}\left(\left\vert B_{t}^{(m+1)}-B_{t}^{(m)}\right\vert >\frac{1}{2^{m\delta}}\right)<\infty,
\]
which implies that $(B_{t}^{(m)})_{m\in\mathbb{N}}$ converges $(p,r)$-quasi-surely
to some random variable $\tilde{B}_{t}$ as $m$ tends to infinity
by the first Borel-Cantelli lemma. One can show that $(B_{t}^{(m)})_{m\in\mathbb{N}}$
converges in $\mathbb{D}_{r}^{p}$ to $B_{t}$ (see e.g. \cite{LI2019141}
for a proof), and
\[
DB_{t}(s)=K(t,s)1_{[0,t]}(s),
\]
with all higher order Malliavin derivatives of $B_{t}$ equal to zero.
Now we can easily prove that there exists a subsequence $(B_{t}^{(m_{k})})_{k\in\mathbb{N}}$
converging $(p,r)$-quasi-surely by choosing this sequence to be such that
(for example by applying H\"{o}lder's inequality) 
\[
\left\Vert \left(B_{t}^{(m_{k+1})}-B_{t}^{(m_{k})}\right)^{2}\right\Vert _{\mathbb{D}_{r}^{p}}\leq\frac{1}{2^{k+1}},
\]
and applying the first Borel-Cantelli lemma as before. If for $\omega\in\boldsymbol{W}$,
there are infinitely many $k$'s such that $\left\vert B_{t}^{(m_{k+1})}(\omega)-B_{t}^{(m_{k})}(\omega)\right\vert >1$,
then $(B_{t}^{(m_{k})}(\omega))_{k\in\mathbb{N}}$ is not Cauchy. Therefore,
by Chebyshev's inequality, 
\begin{align*}
 & \hphantom{=\ \ }\sum_{k=0}^{\infty}c_{p,r}\left\{ \omega:\left\vert B_{t}^{(m_{k+1})}(\omega)-B_{t}^{(m_{k})}(\omega)\right\vert >1\right\} \\
 & =\sum_{k=0}^{\infty}c_{p,r}\left\{ \omega:\left(B_{t}^{(m_{k+1})}(\omega)-B_{t}^{(m_{k})}(\omega)\right)^{2}>1\right\} \\
 & \leq\sum_{k=0}^{\infty}\frac{1}{2^{k+1}}<\infty,
\end{align*}
and hence by the first Borel-Cantelli lemma, 
\[
c_{p,r}\left\{ \left\vert B_{t}^{(m_{k+1})}-B_{t}^{(m_{k})}\right\vert >1\text{ infinitely often}\right\} =0.
\]
As a consequence, $(B_{t}^{(m_{k})})_{k\in\mathbb{N}}$ converges to $B_{t}$
apart from on a slim set, and the uniqueness of limit forces its limit
to be $\tilde{B}_{t}$, which implies $B_{t}=\tilde{B}_{t}$ q.s.
\end{proof}
From now on, we work with the modification of $B$ which is the $(p,r)$-quasi-sure limit of the approximations $B^{(m)}$.

\section{Exponential tightness of the approximation sequence}

For each fixed $t$, $B_{t}$ is quasi-surely defined (with $B_{0}(\omega)=0$
for all $\omega\in\boldsymbol{W}$). We define a map $X^{(m)}:\boldsymbol{W}\to\boldsymbol{W}$
by
\[
X^{(m)}(\omega)(t):=B_{t_{k-1}^{m}}(\omega)+2^{m}\left(t-t_{k-1}^{m}\right)\left(B_{t_{k}^{m}}(\omega)-B_{t_{k-1}^{m}}(\omega)\right),\quad\forall t_{k-1}^{m}\leq t\leq t_{k}^{m},
\]
where $t_{k}^{m}=\frac{k}{2^{m}}$. Then $X^{(m)}$ is $(p,r)$-quasi-surely
defined as it is a linear interpolation of finitely many $B_{t_{k}^{m}}$'s for all $p$ and $r$.
For each $m$, let $X^{\varepsilon,(m)}$ be the scaled map, which
is defined as $X^{\varepsilon,(m)}(\omega)=X^{(m)}(\varepsilon\omega)$.
As $B_{t}$ is the limit of linear combinations of $\omega_{t}$'s,
it follows that $X^{\varepsilon,(m)}(\omega)=\varepsilon X^{(m)}(\omega)$. 

Our goal is to show that the sequence $(X^{(m)})_{m\in\mathbb{N}}$
converges to some $X$ quasi-surely, which implies that $(X^{\varepsilon,(m)})_{m\in\mathbb{N}}$
converges to $X^{\varepsilon}$ quasi-surely, where the scaled map
$X^{\varepsilon}$ is given by $X^{\varepsilon}(\omega)=X(\varepsilon\omega)=\varepsilon X(\omega)$.
Moreover, the fact that the sequence of scaled maps $(X^{\varepsilon,(m)})_{m\in\mathbb{N}}$
converges exponentially fast will be revealed in the proof as well.
Since $X^{\varepsilon}$ is quasi-surely defined with exponentially
good approximations $(X^{\varepsilon,(m)})_{m\in\mathbb{N}}$,
we may apply the result from the LDP theory to conclude the final result. 

We will need the following estimate from the rough path analysis,
which is contained in \cite{Lyons2002} (see Proposition 4.1.1 on
page 62 or equation (4.15) on page 64). Here, we adapt the result
to our case and state it as the following:
\begin{prop}
\label{prop:Lyons Qian}Let $u$ and $w$ be two continuous paths
in a Banach space. Then for any $q>1$ and $\gamma>q-1$, there exists
a constant $C_{q,\gamma}$ depending only on $q$ and $\gamma$ such
that 
\[
\sup_{D}\sum_{l}\left\vert u_{t_{l-1},t_{l}}-w_{t_{l-1},t_{l}}\right\vert ^{q}\leq C_{q,\gamma}\sum_{n=1}^{\infty}n^{\gamma}\sum_{k=1}^{2^{n}}\left\vert u_{t_{k-1}^{n},t_{k}^{n}}-w_{t_{k-1}^{n},t_{k}^{n}}\right\vert ^{q},
\]
where the supremum is taken over all finite partitions $D$ of $[0,1]$,
$t_{l}^{n}=\frac{l}{2^{n}}$ for $n=1,2,\cdots$, $l=0,\cdots,2^{n}$,
and $u_{s,t}=u_{t}-u_{s}$ is the increment of path $u$. 
\end{prop}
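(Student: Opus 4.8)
The plan is to reduce to a single path and then control an arbitrary increment by a weighted sum of dyadic increments via a telescoping decomposition, after which summing over the partition is essentially bookkeeping. First I would set $z=u-w$, which is again a continuous path with $z_{s,t}=u_{s,t}-w_{s,t}$, so the asserted inequality is equivalent to
\[
\sup_{D}\sum_{l}\left\vert z_{t_{l-1},t_{l}}\right\vert^{q}\leq C_{q,\gamma}\sum_{n=1}^{\infty}n^{\gamma}\sum_{k=1}^{2^{n}}\left\vert z_{t_{k-1}^{n},t_{k}^{n}}\right\vert^{q}.
\]
I would fix an arbitrary finite partition $D$ and bound $\sum_{l}\vert z_{t_{l-1},t_{l}}\vert^{q}$ by the right-hand side with a constant independent of $D$, and only at the end take the supremum over $D$.

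The core step is a dyadic decomposition of a single increment. For $0\leq a<b\leq1$, set $a_{n}=\lceil 2^{n}a\rceil/2^{n}$ and $b_{n}=\lfloor 2^{n}b\rfloor/2^{n}$, the level-$n$ dyadic points approaching $a$ from above and $b$ from below; then $a_{n}\downarrow a$, $b_{n}\uparrow b$, and by continuity of $z$ one has $z_{a,b}=\lim_{n}z_{a_{n},b_{n}}$. Let $N_{0}\geq1$ be the first level with $a_{N_{0}}\leq b_{N_{0}}$; minimality forces $b-a<4\cdot2^{-N_{0}}$, so $[a_{N_{0}},b_{N_{0}}]$ is a union of at most four consecutive level-$N_{0}$ dyadic intervals. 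Telescoping across levels gives
\[
z_{a,b}=z_{a_{N_{0}},b_{N_{0}}}+\sum_{n=N_{0}}^{\infty}\left(z_{a_{n+1},a_{n}}+z_{b_{n},b_{n+1}}\right),
\]
where each bracketed term is either zero or a single dyadic increment of level $n+1$, because $a_{n}-a_{n+1}$ and $b_{n+1}-b_{n}$ lie in $\{0,2^{-(n+1)}\}$. Thus $z_{a,b}$ is written as a sum of dyadic increments using at most four per level, and crucially every dyadic interval that appears is contained in $[a,b]$.

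Next I would convert this into the weighted bound. Writing $\delta_{n}$ for the sum of $\vert z_{I}\vert$ over the (at most four) level-$n$ dyadic intervals $I$ used, we have $\vert z_{a,b}\vert\leq\sum_{n\geq1}\delta_{n}$, and Hölder's inequality with exponents $q$ and $q/(q-1)$ applied to $\delta_{n}=(n^{\gamma/q}\delta_{n})\cdot n^{-\gamma/q}$ yields
\[
\left\vert z_{a,b}\right\vert^{q}\leq\left(\sum_{n=1}^{\infty}n^{-\gamma/(q-1)}\right)^{q-1}\sum_{n=1}^{\infty}n^{\gamma}\delta_{n}^{q}.
\]
The hypothesis $\gamma>q-1$ is exactly the condition making $\sum_{n}n^{-\gamma/(q-1)}$ converge, which is where it enters; and since at most four intervals feed into $\delta_{n}$, convexity gives $\delta_{n}^{q}\leq4^{q-1}\sum_{I}\vert z_{I}\vert^{q}$ summed over the level-$n$ intervals used.

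Finally I would apply this to each $[t_{l-1},t_{l}]$ and sum over $l$. The decisive point is that the dyadic intervals decomposing distinct partition intervals lie inside those pairwise disjoint partition intervals, so across all $l$ each dyadic interval $[t_{k-1}^{n},t_{k}^{n}]$ is charged at most once; hence $\sum_{l}\sum_{I\text{ used},\,\mathrm{lvl}\,n}\vert z_{I}\vert^{q}\leq\sum_{k=1}^{2^{n}}\vert z_{t_{k-1}^{n},t_{k}^{n}}\vert^{q}$, and combining the estimates produces the claim with $C_{q,\gamma}=4^{q-1}\big(\sum_{n}n^{-\gamma/(q-1)}\big)^{q-1}$, uniformly in $D$. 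I expect the main obstacle to be precisely this combinatorial bookkeeping of the decomposition in the second and fourth paragraphs: arranging that only boundedly many dyadic intervals occur at each level and that all of them sit inside $[a,b]$, which is what guarantees bounded multiplicity when summing over the partition. The Hölder step and the role of $\gamma>q-1$ are then routine.
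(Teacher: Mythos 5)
Your proposal is correct, and it is essentially the standard argument behind this estimate: the paper itself gives no proof, citing Proposition 4.1.1 of Lyons--Qian, and your dyadic telescoping of an arbitrary increment (boundedly many dyadic intervals per level, all contained in $[a,b]$), followed by H\"older's inequality with weights $n^{\gamma/q}$ (where $\gamma>q-1$ gives convergence of $\sum_n n^{-\gamma/(q-1)}$) and the disjointness bookkeeping over the partition, is exactly that proof. The only cosmetic remark is that your decomposition in fact uses at most two dyadic intervals per level rather than four, but this affects only the constant $C_{q,\gamma}$.
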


Together with Proposition \ref{prop:Lp to L2}, the above estimate
allows us to simplify our problem by controlling the $L^{2}$-norm of Gaussian processes instead of capacities.
\begin{thm}
\label{thm:Exp good X}For $r\in\mathbb{N}$ and $1<p<\infty$, $\left(X^{(m)}\right)_{m\in\mathbb{N}}$
converges $(p,r)$-quasi-surely to some limit $X$, and the scaled
maps $\left(X^{\varepsilon,(m)}\right)_{m\in\mathbb{N}}$ are exponentially good approximations of $\{X^{\varepsilon}:\varepsilon>0\}$ under the
capacity $c_{p,r}$.
\end{thm}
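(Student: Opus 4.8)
The plan is to reduce both conclusions to a single quantitative capacity bound for the one-step increment $D^{(m)}:=X^{(m+1)}-X^{(m)}$, and then to feed this bound into a Borel--Cantelli argument for the quasi-sure convergence and into a moment optimisation for the exponential goodness. First I would record the geometry of the interpolation difference. Since $X^{(m+1)}$ and $X^{(m)}$ both agree with $B$ at every dyadic point of level $m$, the path $D^{(m)}$ is piecewise linear, vanishes at those points, and at the midpoints $t^{m+1}_{2k-1}$ takes the values $\xi^{(m)}_k:=B_{t^{m+1}_{2k-1}}-\tfrac12(B_{t^{m}_{k-1}}+B_{t^{m}_{k}})$; consequently each increment of $D^{(m)}$ over a dyadic subinterval of level $n\ge m+1$ is an explicit multiple $\pm 2^{\,m+1-n}\xi^{(m)}_{j}$, while increments at levels $n\le m$ vanish. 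Writing $\xi^{(m)}_k=\tfrac12[(B_{t^{m+1}_{2k-1}}-B_{t^{m}_{k-1}})-(B_{t^m_k}-B_{t^{m+1}_{2k-1}})]$ and invoking stationarity of increments together with self-similarity of the fBM---exactly the reformulation that sidestepped the kernel singularity in Theorem~\ref{thm:Th4-2}---I would obtain $\operatorname{Var}(\xi^{(m)}_k)=\tfrac{4-2^{2H}}{4}(2^{-m-1})^{2H}$, which is strictly positive for $\tfrac12\le H<1$ and independent of $k$.

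Next I would turn the capacity of $\{\|D^{(m)}\|>\lambda\}$ into an $L^2$ quantity via three successive tools. For an even integer $q$, the capacity version of Chebyshev's inequality gives $c_{p,r}(\|D^{(m)}\|>\lambda)\le\lambda^{-q}\|\Theta_m\|_{\mathbb{D}_r^p}$, where $\Theta_m$ denotes the right-hand side of Proposition~\ref{prop:Lyons Qian} applied to the pair $(X^{(m+1)},X^{(m)})$; this is legitimate because $\|D^{(m)}\|^q$ is dominated by the $q$-variation and hence by $\Theta_m$. Since $\Theta_m$ is a convergent sum of $q$-th powers of first-chaos (Gaussian) increments, it lies in $\mathcal P_q$, so Proposition~\ref{prop:Lp to L2} bounds $\|\Theta_m\|_{\mathbb{D}_r^p}$ by a constant multiple of $\|\Theta_m\|_2$. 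Summing explicitly with the Gaussian identity $\|Z\|_{2q}^q=(\operatorname{Var}Z)^{q/2}((2q-1)!!)^{1/2}$ and the variances above, the geometric series in $n\ge m+1$ (weight $2^{-n(q-1)}$ against $n^\gamma$) collapses to a bound of the shape $\|\Theta_m\|_2\le A_q\,q^{q/2}\,2^{-(m+1)(Hq-1)}(m+1)^{\gamma}$, where $A_q$ collects the Lyons--Qian and hypercontractivity constants.

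For the quasi-sure convergence I would fix a single even $q>1/H$ and take $\lambda=2^{-m\delta}$ with $0<\delta<H-\tfrac1q$ (possible precisely because $q>1/H$, and reaching $H=\tfrac12$ already at $q=4$). The resulting bound $c_{p,r}(\|D^{(m)}\|>2^{-m\delta})\le C\,2^{-m(Hq-1-q\delta)}(m+1)^{\gamma}$ is summable in $m$, so the capacity Borel--Cantelli lemma makes $(X^{(m)})$ quasi-surely Cauchy, hence quasi-surely convergent to a limit $X$; as each $X^{(m)}$ is quasi-surely defined and a countable union of slim sets is slim, $X$ is quasi-surely defined, and scaling yields $X^{\varepsilon,(m)}=\varepsilon X^{(m)}\to\varepsilon X=X^{\varepsilon}$. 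For the exponential goodness, I would use $X^{\varepsilon,(m)}-X^{\varepsilon}=\varepsilon(X^{(m)}-X)$ and the telescoping $X^{(m)}-X=-\sum_{m'\ge m}D^{(m')}$, cover $\{\|X^{(m)}-X\|>\lambda/\varepsilon\}$ by $\bigcup_{m'\ge m}\{\|D^{(m')}\|>\eta_{m'}\}$ with a geometric choice $\sum_{m'}\eta_{m'}\le\lambda/\varepsilon$, and apply the single-step bound to each term (the slowest being $m'=m$). The crucial move is that this bound holds for \emph{every} even $q$: for fixed $m$ I would let $q=q(\varepsilon)\to\infty$ and minimise $(\tilde a\sqrt q)^{q}$ with $\tilde a\asymp\varepsilon\,2^{-mH}/\lambda$, producing a genuinely Gaussian tail $c_{p,r}(\|X^{\varepsilon,(m)}-X^{\varepsilon}\|>\lambda)\le 2^{m}\exp(-c_m\lambda^2/\varepsilon^2)$ with $c_m\asymp 2^{2mH}(m+1)^{-O(1)}$. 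Hence $\limsup_{\varepsilon\to0}\varepsilon^2\log c_{p,r}(\cdots)\le-c_m\lambda^2\to-\infty$ as $m\to\infty$, which is exactly the required exponential goodness.

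I expect the main obstacle to be the $q$-dependence of the constants: the whole Gaussian rate is produced by the super-exponential factor $q^{q/2}$ coming from the Gaussian moments, so I must check that $A_q$ (the Lyons--Qian constant $C_{q,\gamma}$ with $\gamma\approx q$ together with the hypercontractivity constant from Proposition~\ref{prop:Lp to L2}) grows only like $C^{q}\operatorname{poly}(q)$, so that it can be folded into $\tilde a$ while merely degrading $c_m$ by a polynomial-in-$m$ factor. A fixed moment $q$ is demonstrably useless here, since it yields $\limsup_{\varepsilon\to0}\varepsilon^2\log c_{p,r}=0$; letting $q$ grow with $\varepsilon$ is therefore indispensable. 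The only other delicate point, the variance bookkeeping near the singularity of $K(t,\cdot)$ at $s=0$, is defused by the second-order-increment formula for $\xi^{(m)}_k$, exactly as in the proof of Theorem~\ref{thm:Th4-2}.
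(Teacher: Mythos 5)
Your proposal is correct and follows the same overall architecture as the paper's proof -- the dyadic variation estimate of Proposition~\ref{prop:Lyons Qian}, reduction to Gaussian $L^{2}$ quantities via the capacity Chebyshev inequality and the hypercontractive bound of Proposition~\ref{prop:Lp to L2}, Borel--Cantelli with $\lambda=2^{-m\delta}$ for the quasi-sure convergence, and a moment exponent sent to infinity with $\varepsilon$ for the exponential goodness -- but it differs in two genuine ways. First, you identify $X^{(m+1)}-X^{(m)}$ exactly as the tent function built from the second-order increments $\xi^{(m)}_{k}$ and compute $\operatorname{Var}(\xi^{(m)}_{k})$ from the fBM covariance; the paper instead applies the cruder bound $|a-b|^{2N}\leq2^{2N-1}\left(|a|^{2N}+|b|^{2N}\right)$ to each dyadic increment (noting only that levels $n\leq m$ contribute nothing), which yields the same order $2^{-m(qH-1)}$ with slightly worse constants, so your version is cleaner and mirrors the second-order-increment device already used in Theorem~\ref{thm:Th4-2}. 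Second, and more consequentially, you tie the Lyons--Qian exponent $q$ to the Chebyshev moment power and let both tend to infinity, which forces you to verify that $C_{q,\gamma}$ with $\gamma\approx q$ grows no faster than $C^{q}\mathrm{poly}(q)$; this is in fact true (H\"older with $\gamma=2(q-1)$ gives $C_{q,\gamma}\leq C^{q}$, at the price of a harmless $(m+1)^{2q}$ factor that you correctly fold into the base of the exponential), but it is precisely the point where your plan carries risk. The paper decouples the two parameters: it first splits the event by sub-additivity into single-increment events at scale $\lambda2^{-n(1+\theta)/q}$ with $q$, $\gamma$, $\theta$ held \emph{fixed}, and only then applies Chebyshev with a separate power $2N$ taken to be $\left\lfloor\varepsilon^{-2}\right\rfloor$; the Lyons--Qian constant then enters only as $C_{q,\gamma}^{2N/q}=(\mathrm{const})^{N}$ and never needs to be tracked in $q$. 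That decoupling is the one idea worth borrowing if the growth of $C_{q,\gamma}$ proved awkward; it also reduces every Chebyshev application to a polynomial in finitely many quasi-continuous variables, whereas your single application to the infinite sum $\Theta_{m}$ requires the routine extra check that $\Theta_{m}$ lies in $\mathbb{D}_{r}^{p}$ and is admissible (quasi-lower-semicontinuous) for the capacity Chebyshev inequality.
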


\begin{proof}
Here we use a technique in the theory of rough paths to control the
tails of $X^{(m)}$'s, which are Gaussian. Let us
first prove that the sequence $\left(X^{(m)}\right)_{m\in\mathbb{N}}$ converges
uniformly $(p,r)$-quasi-surely. By using the elementary fact that 
\[
\lVert u-w\rVert\leq\sup_{D}\left(\sum_{l}\left\vert u_{t_{l-1},t_{l}}-w_{t_{l-1},t_{l}}\right\vert ^{q}\right)^{\frac{1}{q}},
\]
for any $u,w\in\boldsymbol{W}$ and for any $q>1$, where the supremum
is taken over all possible finite partitions of $[0,1]$, and $u_{s,t}=u_{t}-u_{s}$,
together with Proposition \ref{prop:Lyons Qian}, we obtain that
\[
\lVert u-w\rVert^{q}\leq C_{q,\gamma}\sum_{n=1}^{\infty}n^{\gamma}\sum_{k=1}^{2^{n}}\left\vert u_{t_{k-1}^{n},t_{k}^{n}}-w_{t_{k-1}^{n},t_{k}^{n}}\right\vert ^{q}
\]
 for $\gamma>q-1$, where $C_{q,\gamma}$ is a constant depending
on $q$ and $\gamma$, and $t_{k}^{n}=\frac{k}{2^{n}}$. We will apply
the above estimate to $X^{(m)}$ to obtain an upper bound of
\begin{equation}
I_{m}(\lambda):=c_{p,r}\left(\left\Vert X^{(m+1)}-X^{(m)}\right\Vert >\lambda\right),\label{eq:17}
\end{equation}
where $\lambda>0$. Since $c_{p,r}$ is increasing in $p$,
we shall assume that $p>2$. By monotonicity and sub-additivity properties
of capacity, we obtain that for $\theta>0$,
\begin{align}
I_{m}(\lambda) & \leq c_{p,r}\left(C_{q,\gamma}\sum_{n=1}^{\infty}n^{\gamma}\sum_{k=1}^{2^{n}}\left\vert X_{t_{k-1}^{n},t_{k}^{n}}^{(m+1)}-X_{t_{k-1}^{n},t_{k}^{n}}^{(m)}\right\vert ^{q}>\lambda^{q}\right)\nonumber \\
 & =c_{p,r}\left(\sum_{n=1}^{\infty}n^{\gamma}\sum_{k=1}^{2^{n}}\left\vert X_{t_{k-1}^{n},t_{k}^{n}}^{(m+1)}-X_{t_{k-1}^{n},t_{k}^{n}}^{(m)}\right\vert ^{q}>C_{q,\gamma}^{-1}C_{\theta,\gamma}\sum_{n=1}^{\infty}n^{\gamma}\frac{\lambda^{q}}{2^{n\theta}}\right)\nonumber \\
 & \leq\sum_{n=1}^{\infty}c_{p,r}\left(\sum_{k=1}^{2^{n}}\left\vert X_{t_{k-1}^{n},t_{k}^{n}}^{(m+1)}-X_{t_{k-1}^{n},t_{k}^{n}}^{(m)}\right\vert ^{q}>C_{q,\gamma}^{-1}C_{\theta,\gamma}\frac{\lambda^{q}}{2^{n\theta}}\right)\nonumber \\
 & \leq\sum_{n=1}^{\infty}\sum_{k=1}^{2^{n}}c_{p,r}\left(\left\vert X_{t_{k-1}^{n},t_{k}^{n}}^{(m+1)}-X_{t_{k-1}^{n},t_{k}^{n}}^{(m)}\right\vert >C_{q,\gamma}^{-\frac{1}{q}}C_{\theta,\gamma}^{\frac{1}{q}}\frac{\lambda}{2^{\frac{n(1+\theta)}{q}}}\right),\label{eq:12}
\end{align}
where $C_{\theta,\gamma}=\left(\sum_{n=1}^{\infty}\frac{n^{\gamma}}{2^{n\theta}}\right)^{-1}$.
We introduce a new parameter $N$, whose value is to be determined
at the end of our proof, and consider 
\begin{equation}
c_{p,r}\left(\left\vert X_{t_{k-1}^{n},t_{k}^{n}}^{(m+1)}-X_{t_{k-1}^{n},t_{k}^{n}}^{(m)}\right\vert ^{2N}>C_{q,\gamma}^{-\frac{2N}{q}}C_{\theta,\gamma}^{\frac{2N}{q}}\frac{\lambda^{2N}}{2^{\frac{2nN(1+\theta)}{q}}}\right).\label{eq:13}
\end{equation}
Notice that when $n\leq m$, $X_{t_{k}^{n}}^{(m)}=B_{t_{2^{m-n}k}^{m}}$.
Since $t_{2^{m-n}k}^{m}=t_{2^{m+1-n}k}^{m+1}$, we have that $X_{t_{k-1}^{n},t_{k}^{n}}^{(m+1)}=X_{t_{k-1}^{n},t_{k}^{n}}^{(m)}$.
By Chebyshev's inequality, we obtain that
\begin{align}
 & \hphantom{=\ \ }c_{p,r}\left(\left\vert X_{t_{k-1}^{n},t_{k}^{n}}^{(m+1)}-X_{t_{k-1}^{n},t_{k}^{n}}^{(m)}\right\vert ^{2N}>C_{q,\gamma,\theta,N}\frac{\lambda^{2N}}{2^{\frac{2nN(1+\theta)}{q}}}\right)\nonumber \\
 & \leq C_{q,\gamma,\theta,N}^{-1}\lambda^{-2N}2^{\frac{2nN(1+\theta)}{q}}\left\Vert \left\vert X_{t_{k-1}^{n},t_{k}^{n}}^{(m+1)}-X_{t_{k-1}^{n},t_{k}^{n}}^{(m)}\right\vert ^{2N}\right\Vert _{\mathbb{D}_{r}^{p}},\label{eq:14}
\end{align}
 where $C_{q,\gamma,\theta,N}=C_{q,\gamma}^{-\frac{2N}{q}}C_{\theta,\gamma}^{\frac{2N}{q}}$
is a constant. Now by Proposition \ref{prop:Lp to L2}, with $\left\vert X_{t_{k-1}^{n},t_{k}^{n}}^{(m+1)}-X_{t_{k-1}^{n},t_{k}^{n}}^{(m)}\right\vert ^{2N}$
a \textcolor{black}{polynomial functional} of degree $2N$, and $N\geq\frac{r}{2}$,
we have that
\begin{align*}
 & \hphantom{=}\ \ \left\Vert \left\Vert D^{l}\left(\left\vert X_{t_{k-1}^{n},t_{k}^{n}}^{(m+1)}-X_{t_{k-1}^{n},t_{k}^{n}}^{(m)}\right\vert ^{2N}\right)\right\Vert _{\mathscr{H}^{\otimes l}}\right\Vert_p \leq(2N+1)(p-1)^{\frac{N}{2}}\left\Vert \left\Vert D^{l}\left(\left\vert X_{t_{k-1}^{n},t_{k}^{n}}^{(m+1)}-X_{t_{k-1}^{n},t_{k}^{n}}^{(m)}\right\vert ^{2N}\right)\right\Vert _{\mathscr{H}^{\otimes l}}\right\Vert _{2},
\end{align*}
and 
\[
\left\Vert \left\Vert D^{l}\left(\left\vert X_{t_{k-1}^{n},t_{k}^{n}}^{(m+1)}-X_{t_{k-1}^{n},t_{k}^{n}}^{(m)}\right\vert ^{2N}\right)\right\Vert _{\mathscr{H}^{\otimes l}}\right\Vert _{2}\leq(2N)^{\frac{r}{2}}\left\Vert \left\vert X_{t_{k-1}^{n},t_{k}^{n}}^{(m+1)}-X_{t_{k-1}^{n},t_{k}^{n}}^{(m)}\right\vert ^{2N}\right\Vert _{2}
\]
for any $0\leq l\leq r$. The above two inequalities imply that
\begin{equation}
\left\Vert \left\vert X_{t_{k-1}^{n},t_{k}^{n}}^{(m+1)}-X_{t_{k-1}^{n},t_{k}^{n}}^{(m)}\right\vert ^{2N}\right\Vert _{\mathbb{D}_{r}^{p}}\leq(r+1)(2N+1)(p-1)^{\frac{N}{2}}(2N)^{\frac{r}{2}}\left\Vert \left\vert X_{t_{k-1}^{n},t_{k}^{n}}^{(m+1)}-X_{t_{k-1}^{n},t_{k}^{n}}^{(m)}\right\vert ^{2N}\right\Vert _{2}.\label{eq:19}
\end{equation}
For $N=1,2,\cdots$, $f(x)=x^{2N}$ is convex, so by Jensen's inequality,
\[
f(x+y)\leq 2^{2N-1}\left(f(x)+f(y)\right),
\]
and hence
\[
\left\vert X_{t_{k-1}^{n},t_{k}^{n}}^{(m+1)}-X_{t_{k-1}^{n},t_{k}^{n}}^{(m)}\right\vert ^{2N}\leq2^{2N-1}\left(\left\vert X_{t_{k-1}^{n},t_{k}^{n}}^{(m+1)}\right\vert ^{2N}+\left\vert X_{t_{k-1}^{n},t_{k}^{n}}^{(m)}\right\vert ^{2N}\right).
\]
Therefore, it suffices to estimate $\left\Vert \left\vert X_{t_{k-1}^{n},t_{k}^{n}}^{(m)}\right\vert ^{2N}\right\Vert _{2}$.
By definition, if $t_{l-1}^{m}\leq t_{k-1}^{n}<t_{k}^{n}\leq t_{l}^{m}$
for some $l$, then
\begin{align*}
X_{t_{k-1}^{n},t_{k}^{n}}^{(m)} & =2^{m}\left(t_{k}^{n}-t_{k-1}^{n}\right)\left(B_{t_{l}^{m}}-B_{t_{l-1}^{m}}\right).
\end{align*}
Hence, by Proposition \ref{prop:Lp to L2},
\begin{align*}
\left\Vert \left\vert X_{t_{k-1}^{n},t_{k}^{n}}^{(m)}\right\vert ^{2N}\right\Vert _{2} & =\left\Vert X_{t_{k-1}^{n},t_{k}^{n}}^{(m)}\right\Vert _{4N}^{2N}\\
 & \leq2^{2N}(4N-1)^{N}\left\Vert X_{t_{k-1}^{n},t_{k}^{n}}^{(m)}\right\Vert _{2}^{2N}\\
 & =2^{2N}(4N-1)^{N}\mathbb{E}\left[\left(2^{m}\left(t_{k}^{n}-t_{k-1}^{n}\right)\left(B_{t_{l}^{m}}-B_{t_{l-1}^{m}}\right)\right)^{2}\right]^{N}\\
\vphantom{\left\Vert X_{t_{k-1}^{n},t_{k}^{n}}^{(m)}\right\Vert _{2}^{2N}} & =2^{2N}(4N-1)^{N}\frac{2^{2mN(1-H)}}{2^{2nN}}.
\end{align*}
It thus implies that 
\begin{align}
\left\Vert \left\vert X_{t_{k-1}^{n},t_{k}^{n}}^{(m+1)}-X_{t_{k-1}^{n},t_{k}^{n}}^{(m)}\right\vert ^{2N}\right\Vert _{2} & \leq2^{2N-1}\left(\left\Vert \left\vert X_{t_{k-1}^{n},t_{k}^{n}}^{(m+1)}\right\vert ^{2N}\right\Vert _{2}+\left\Vert \left\vert X_{t_{k-1}^{n},t_{k}^{n}}^{(m)}\right\vert ^{2N}\right\Vert _{2}\right)\nonumber \\
 & \leq2^{4N-1}(4N-1)^{N}\left(\frac{2^{2(m+1)N(1-H)}}{2^{2nN}}+\frac{2^{2mN(1-H)}}{2^{2nN}}\right)\nonumber \\
 & =C_{N,H}\frac{2^{2mN(1-H)}}{2^{2nN}},\label{eq:20}
\end{align}
where $C_{N,H}=2^{4N-1}\left(4N-1\right)^{N}\left(1+2^{2N(1-H)}\right)$
is a constant depending only on $N$ and $H$. We may conclude from
(\ref{eq:19}) and (\ref{eq:20}) that 
\begin{equation}
\left\Vert \left\vert X_{t_{k-1}^{n},t_{k}^{n}}^{(m+1)}-X_{t_{k-1}^{n},t_{k}^{n}}^{(m)}\right\vert ^{2N}\right\Vert _{\mathbb{D}_{r}^{p}}\leq C_{r,p,N,H}\frac{2^{2mN(1-H)}}{2^{2nN}}\label{eq:15}
\end{equation}
for $n>m$, where 
\begin{equation}
C_{r,p,N,H}=(r+1)(2N+1)(p-1)^{\frac{N}{2}}(2N)^{\frac{r}{2}}C_{N,H}\label{eq:23}
\end{equation}
depends on $r$, $p$, $N$ and $H$. Plugging (\ref{eq:15}) into
(\ref{eq:14}), we obtain that 
\begin{equation}
c_{p,r}\left(\left\vert X_{t_{k-1}^{n},t_{k}^{n}}^{(m+1)}-X_{t_{k-1}^{n},t_{k}^{n}}^{(m)}\right\vert ^{2N}>C_{q,\gamma,\theta}\frac{\lambda^{2N}}{2^{\frac{2nN(1+\theta)}{q}}}\right)\leq C_{q,\gamma,\theta,N}^{-1}C_{r,p,N,H}\lambda^{-2N}\frac{2^{2mN(1-H)}}{2^{2nN\left(1-\frac{1+\theta}{q}\right)}}.\label{eq:21}
\end{equation}
Therefore, according to (\ref{eq:12}), (\ref{eq:13}) and (\ref{eq:21}),
\begin{align*}
I_{m}(\lambda) & \leq\sum_{n=1}^{\infty}\sum_{k=1}^{2^{n}}c_{p,r}\left(\left\vert X_{t_{k-1}^{n},t_{k}^{n}}^{(m+1)}-X_{t_{k-1}^{n},t_{k}^{n}}^{(m)}\right\vert ^{2N}>C_{q,\gamma,\theta,N}\frac{\lambda^{2N}}{2^{\frac{2nN(1+\theta)}{q}}}\right)\\
 & \leq\sum_{n=m+1}^{\infty}\sum_{k=1}^{2^{n}}C_{q,\gamma,\theta,N}^{-1}C_{r,p,N,H}\lambda^{-2N}\frac{2^{2mN(1-H)}}{2^{2nN\left(1-\frac{1+\theta}{q}\right)}}\\
 & =C_{q,\gamma,\theta,N}^{-1}C_{r,p,N,H}\lambda^{-2N}\sum_{n=m+1}^{\infty}\frac{2^{2mN(1-H)}}{2^{n\left(2N\left(1-\frac{1+\theta}{q}\right)-1\right)}}\\
 & =C_{q,\gamma,\theta,N}^{-1}C_{r,p,N,H}\lambda^{-2N}\frac{1}{2^{m\left(2N\left(H-\frac{1+\theta}{q}\right)-1\right)}}\sum_{k=1}^{\infty}\frac{1}{2^{k\left(2N\left(1-\frac{1+\theta}{q}\right)-1\right)}}.
\end{align*}
Since $2N\left(1-\frac{1+\theta}{q}\right)-1>2N\left(H-\frac{1+\theta}{q}\right)-1$,
the above series converges as long as $2N\left(H-\frac{1+\theta}{q}\right)-1>0$,
which means that we need $qH-\frac{q}{2N}-1>0$ for some integer $N$.
Therefore, if we choose $q>\left(H-\frac{1}{2}\right)^{-1}$, and
$\theta\in\left(0,qH-\frac{q}{2N}-1\right)$ then the above series
converges. As a consequence, we thus have
\begin{equation}
I_{m}(\lambda)\leq C_{q,\gamma,\theta,N}^{'}C_{r,p,N,H}\frac{\lambda^{-2N}}{2^{m\left(2N\left(H-\frac{1+\theta}{q}\right)-1\right)}}\label{eq:18}
\end{equation}
for every $m=1,2,\cdots$, where
\begin{align*}
C_{q,\gamma,\theta,N}^{'} & =C_{q,\gamma,\theta,N}^{-1}\sum_{k=1}^{\infty}\frac{1}{2^{k\left(2N\left(1-\frac{1+\theta}{q}\right)-1\right)}}\\
 & \leq C_{q,\gamma,\theta,N}^{-1}\frac{1}{2^{2\left(1-\frac{1+\theta}{q}\right)-1}-1}\\
\vphantom{\frac{1}{2^{2\frac{1+\theta}{q}}}} & =C_{q,\gamma,\theta,N}^{-1}C_{q,\theta}
\end{align*}
with $C_{q,\theta}=\left(2^{2\left(1-\frac{1+\theta}{q}\right)-1}-1\right)^{-1}$.
From (\ref{eq:18}), we may deduce that 
\begin{equation}
I_{m}(\lambda)\leq C_{q,\gamma,\theta,N}^{-1}C_{q,\theta}C_{r,p,N,H}\frac{\lambda^{-2N}}{2^{m\left(2N\left(H-\frac{1+\theta}{q}\right)-1\right)}}.\label{eq:22}
\end{equation}
Applying the same argument as in the previous theorem, we see that
the problem may be reduced to proving that for a suitable positive $\delta>0$,
\begin{equation}
\sum_{m=1}^{\infty}I_{m}(\frac{1}{2^{m\delta}})<\infty.\label{eq:16}
\end{equation}
Then by the first Borel-Cantelli lemma for capacity, we obtain the
quasi-sure convergence for $\left(X^{(m)}\right)_{m\in\mathbb{N}}$. Since 
\[
I_{m}(\frac{1}{2^{m\delta}})\leq C_{q,\gamma,\theta,N}^{-1}C_{q,\theta}C_{r,p,N,H}\frac{1}{2^{m\left(2N\left(H-\frac{1+\theta}{q}-\delta\right)-1\right)}},
\]
so the series in (\ref{eq:16}) converges as long as we choose $\delta$
such that $\delta<H-\frac{1+\theta}{q}-\frac{1}{2N}$, which must
exist as we have chosen $q$ and $\theta$ such that $2N\left(H-\frac{1+\theta}{q}\right)-1>0$
for some $N\in\mathbb{N}$. Thus, the convergence of the series in (\ref{eq:16})
implies the convergence of $\left(X^{(m)}\right)_{m\in\mathbb{N}}$.
Denote its limit by $X$, then $X$ is defined quasi-surely on $\boldsymbol{W}$. 

Next, we prove the sequence $\left\{ X^{\varepsilon,(m)}:m\geq1,\varepsilon>0\right\} $
converges to $\left\{ X^{\varepsilon}:\varepsilon>0\right\} $ exponentially fast
with respect to the capacity $c_{p,r}$, that is, 
\[
\lim_{m\to\infty}\limsup_{\varepsilon\to0}\varepsilon^{2}\log c_{p,r}\left\{ \omega:\left\Vert X^{\varepsilon,(m)}(\omega)-X^{\varepsilon}(\omega)\right\Vert >\lambda\right\} =-\infty.
\]
To this end, we shall use a similar argument as in the proof above.
By the sub-additivity of capacity, for $\alpha>0$,
\begin{align}
\vphantom{\left(\frac{\lambda C_{\alpha}}{2^{(k-m)\alpha}\varepsilon}\right)} & \hphantom{=}\ \ c_{p,r}\left\{ \omega:\left\Vert X^{\varepsilon,(m)}(\omega),X^{\varepsilon}(\omega)\right\Vert >\lambda\right\} \nonumber \\
\vphantom{\left(\frac{\lambda C_{\alpha}}{2^{(k-m)\alpha}\varepsilon}\right)} & \leq c_{p,r}\left\{ \omega:\sum_{k=m}^{\infty}\left\Vert X^{\varepsilon,(k)}(\omega),X^{\varepsilon,(k+1)}(\omega)\right\Vert >\lambda\right\} \nonumber \\
 & =c_{p,r}\left\{ \omega:\sum_{k=m}^{\infty}\left\Vert X^{\varepsilon,(k)}(\omega),X^{\varepsilon,(k+1)}(\omega)\right\Vert >C_{\alpha}\sum_{k=m}^{\infty}\frac{\lambda}{2^{(k-m)\alpha}}\right\} \nonumber \\
 & \leq\sum_{k=m}^{\infty}c_{p,r}\left\{ \omega:\left\Vert X^{(k)}(\omega),X^{(k+1)}(\omega)\right\Vert >C_{\alpha}\varepsilon^{-1}\frac{\lambda}{2^{(k-m)\alpha}}\right\} \nonumber \\
 & =\sum_{k=m}^{\infty}I_{k}\left(\frac{\lambda C_{\alpha}}{2^{(k-m)\alpha}\varepsilon}\right),\label{eq:26}
\end{align}
where we have used the notations in (\ref{eq:17}), and $C_{\alpha}=\left(\sum_{i=0}^{\infty}\frac{1}{2^{i\alpha}}\right)^{-1}$is
some positive constant depending only on $\alpha$. Recall that up
to now, the only assumption on $N$ is that $N\geq\frac{r}{2}$, and
now we shall pick up a suitable $N$ to show that the convergence of
$\left(X^{\varepsilon,(m)}\right)_{m\in\mathbb{N}}$ is exponentially
fast. By (\ref{eq:22}), 
\begin{align}
I_{k}\left(\frac{\lambda C_{\alpha}}{2^{(k-m)\alpha}\varepsilon}\right) & \leq C_{q,\gamma,\theta,N}^{-1}C_{r,p,N,H}C_{q,\theta}\frac{1}{2^{k\left(2N\left(H-\frac{1+\theta}{q}-\alpha\right)-1\right)}}\frac{1}{2^{2Nm\alpha}}\frac{\varepsilon^{2N}}{\lambda^{2N}C_{\alpha}^{2N}}\nonumber \\
 & =C_{q,\gamma,\theta,N}^{-1}C_{r,p,N,H}C_{q,\theta}C_{\alpha}^{-2N}\frac{1}{2^{k\beta}}\frac{1}{2^{2Nm\alpha}}\frac{\varepsilon^{2N}}{\lambda^{2N}},\label{eq:25}
\end{align}
where $\beta=2N\left(H-\frac{1+\theta}{q}-\alpha\right)-1$. As $C_{q,\gamma,\theta,N}=C_{q,\gamma}^{-\frac{2N}{q}}C_{\theta,\gamma}^{\frac{2N}{q}}$,
where $C_{r,p,N,H}$ is given as in (\ref{eq:23}) with $C_{N,H}=2^{4N-1}\left(4N-1\right)^{N}\left(1+2^{2N(1-H)}\right)$,
we have that
\begin{align}
 & \hphantom{=\ \ }C_{q,\gamma,\theta,N}^{-1}C_{r,p,N,H}\nonumber \\
 & =C_{q,\gamma}^{\frac{2N}{q}}C_{\theta,\gamma}^{-\frac{2N}{q}}(r+1)(2N+1)(p-1)^{\frac{N}{2}}(2N)^{\frac{r}{2}}2^{4N-1}\left(4N-1\right)^{N}\left(1+2^{2N(1-H)}\right)\nonumber \\
 & \leq(r+1)(2N+1)(2N)^{\frac{r}{2}}\left(16C_{q,\gamma}^{\frac{2}{q}}C_{\theta,\gamma}^{-\frac{2}{q}}(p-1)^{\frac{1}{2}}\right)^{N}\left(4N\right)^{N}2^{2N(1-H)}\nonumber \\
 & :=P_{r}(N)C_{q,\gamma,\theta,p,H}^{N}N^{N},\label{eq:24}
\end{align}
where 
\[
P_{r}(N)=(r+1)(2N+1)(2N)^{\frac{r}{2}}
\]
is a polynomial of $N$ depending only on $r$, and $C_{q,\gamma,\theta,p,H}=64C_{q,\gamma}^{\frac{2}{q}}C_{\theta,\gamma}^{-\frac{2}{q}}(p-1)^{\frac{1}{2}}2^{2(1-H)}$
is a constant. If we set $\alpha$ such that $\alpha<H-\frac{1+\theta}{q}-\frac{1}{2N}$,
then $\beta>0$. Together with (\ref{eq:25}) and (\ref{eq:24}),
we obtain that
\begin{align*}
\sum_{k=m}^{\infty}I_{k}\left(\frac{\lambda C_{\alpha}}{2^{(k-m)\alpha}\varepsilon}\right) & \leq P_{r}(N)C_{q,\gamma,\theta,p,H}^{N}N^{N}C_{q,\theta}C_{\alpha}^{-2N}\frac{1}{2^{2Nm\alpha}}\frac{\varepsilon^{2N}}{\lambda^{2N}}\sum_{k=m}^{\infty}\frac{1}{2^{k\beta}}\\
 & =P_{r}(N)C_{q,\gamma,\theta,p,H}^{N}N^{N}C_{q,\theta}C_{\alpha}^{-2N}\lambda^{-2N}\varepsilon^{2N}\sum_{k=0}^{\infty}\frac{1}{2^{k\beta}}\frac{1}{2^{m(2N\alpha+\beta)}}\\
 & =P_{r,q,\theta,\beta}(N)C_{q,\gamma,\theta,p,H,\alpha}^{N}\lambda^{-2N}\varepsilon^{2N}N^{N}\frac{1}{2^{m\left(2N\left(H-\frac{1+\theta}{q}\right)-1\right)}},
\end{align*}
where $C_{q,\gamma,\theta,p,H,\alpha}=C_{q,\gamma,\theta,p,H}C_{\alpha}^{-2}$
and $P_{r,q,\theta,\beta}(N)=C_{q,\theta}\left(\sum_{k=0}^{\infty}\frac{1}{2^{k\beta}}\right)P_{r}(N)$.
According to (\ref{eq:26}), it holds that
\begin{align*}
\vphantom{\left(\frac{\varepsilon^{2}N}{\lambda^{2}}\right)} & \hphantom{=}\ \ \varepsilon^{2}\log c_{p,r}\left\{ \omega:\left\Vert X^{\varepsilon,(m)}(\omega),X^{\varepsilon}(\omega)\right\Vert >\lambda\right\} \\
\vphantom{\left(\frac{\varepsilon^{2}N}{\lambda^{2}}\right)} & \leq\varepsilon^{2}\log P_{r,q,\theta,\beta}(N)+\varepsilon^{2}N\log C_{q,\gamma,\theta,p,H,\alpha}\\
 & \hphantom{=}+\varepsilon^{2}N\log\left(\frac{\varepsilon^{2}N}{\lambda^{2}}\right)-\varepsilon^{2}\left(2N\left(H-\frac{1+\theta}{q}\right)-1\right)m\log2.
\end{align*}
For $\varepsilon$ small enough, choose $N=\left\lfloor \varepsilon^{-2}\right\rfloor $.
Then since $P_{r,q,\theta,\beta}(N)$ is a polynomial of $N$, it
holds that 
\[
\limsup_{\varepsilon\to0}\varepsilon^{2}\log c_{p,r}\left\{ \omega:\left\Vert X^{\varepsilon,(m)}(\omega),X^{\varepsilon}(\omega)\right\Vert >\lambda\right\} \leq\log C-2\left(H-\frac{1+\theta}{q}\right)m\log2,
\]
where $C=C_{q,\gamma,\theta,p,H,\alpha}\lambda^{-2}$ is a constant.
Therefore, as $H>\frac{1+\theta}{q}$, 
\[
\lim_{m\to\infty}\limsup_{\varepsilon\to0}\varepsilon^{2}\log c_{p,r}\left\{ \omega:\left\Vert X^{\varepsilon,m}(\omega),X^{\varepsilon}(\omega)\right\Vert >\lambda\right\} =-\infty,
\]
which completes the proof.
\end{proof}

\section{The proof of the main result}

This section is devoted to the proof of the large deviation principles
stated in the second part of the main result, Theorem \ref{thm:main LDP}.

Notice that for each $m$, $X^{(m)}$, which is a Wiener functional
on $\boldsymbol{W}$ defined $(p,r)$-quasi-surely, is a linear interpolation
of some Gaussian random variables, so we may consider $F_{m}:\mathbb{R}^{2^{m}+1}\to\boldsymbol{W}$,
where 
\begin{equation}
F_{m}(x_{0},\cdots x_{2^{m}})(t)=x_{k-1}+2^{m}\left(t-\frac{k-1}{2^{m}}\right)\left(x_{k}-x_{k-1}\right),\quad\forall t\in\left[\frac{k-1}{2^{m}},\frac{k}{2^{m}}\right],\label{eq:31}
\end{equation}
which maps a $(2^{m}+1)$-dimensional vector to its linear interpolation.
Let us apply Varadhan's contraction principle to the maps above. As
the rate function for the vector-valued Gaussian random variable $\left(B_{0},B_{\frac{1}{2^{m}}},\cdots,B_{\frac{k}{2^{m}}},\cdots,B_{1}\right)$
is computable, the quasi-sure version of LDP may be established easily
for $X^{(m)}$. 
\begin{prop}
Let $\boldsymbol{t}:=\left\{ 0\leq t_{1}<t_{2}<\cdots<t_{n}\leq1\right\} $
be a finite partition of $[0,1]$. Define $T^{\varepsilon}:\boldsymbol{W}\to\mathbb{R}^{n}$
(for $\varepsilon>0$) by $T^{\varepsilon}\left(\omega\right)=\boldsymbol{B}_{\boldsymbol{t}}(\varepsilon\omega)$,
where
\[
\boldsymbol{B}_{\boldsymbol{t}}(\omega)=\left(B_{t_{1}}(\omega),\cdots,B_{t_{n}}(\omega)\right)
\]
is a Gaussian vector with covariance matrix $\boldsymbol{\Sigma}=\left(\sigma_{ij}\right)_{1\leq i,j\leq n}$
and $\sigma_{ij}=\text{Cov}(t_{i},t_{j})$. Then $\left\{ T^{\varepsilon}:\varepsilon>0\right\} $
satisfies $c_{p,r}$-LDP with the good rate function $I_{n}:\mathbb{R}^{n}\to[0,\infty]$
given by 
\[
I_{n}(\boldsymbol{x})=\frac{1}{2}\boldsymbol{x}^{T}\boldsymbol{\Sigma}^{-1}\boldsymbol{x}.
\]
\end{prop}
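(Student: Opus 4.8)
The plan is to exploit the first-chaos (linear) structure of $\boldsymbol{B}_{\boldsymbol{t}}$ and to split the argument into an easy lower bound coming from the probabilistic LDP and a harder upper bound obtained from an explicit capacitary estimate for half-spaces. First I would record the structural facts. Since each coordinate $B_{t_i}=\int_0^{t_i}K(t_i,s)\,d\omega(s)$ belongs to the first Wiener chaos, it is a smooth Wiener functional in $\bigcap_{p,r}\mathbb{D}_r^p$, and the scaling $\omega\mapsto\varepsilon\omega$ acts on it by multiplication, so that $T^{\varepsilon}(\omega)=\boldsymbol{B}_{\boldsymbol{t}}(\varepsilon\omega)=\varepsilon\boldsymbol{B}_{\boldsymbol{t}}(\omega)$. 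For distinct $0<t_{1}<\cdots<t_{n}$ the Gaussian vector $\boldsymbol{B}_{\boldsymbol{t}}$ is non-degenerate, so $\boldsymbol{\Sigma}$ is invertible and $I_{n}(\boldsymbol{x})=\tfrac12\boldsymbol{x}^{T}\boldsymbol{\Sigma}^{-1}\boldsymbol{x}$ is continuous with compact ellipsoidal sublevel sets; this already yields the good rate function property (1) of Definition \ref{def:LDP}. (A deterministic coordinate such as $B_{0}=0$ is removed by restricting to the corresponding subspace.)

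For the lower bound I would use only the elementary comparison $(\P(A))^{1/p}\le c_{p,r}(A)$, valid for every $A\in\mathscr{F}$. For open $G\subset\mathbb{R}^{n}$ the set $\{\varepsilon\boldsymbol{B}_{\boldsymbol{t}}\in G\}$ is measurable, whence
\[
\varepsilon^{2}\log c_{p,r}\{T^{\varepsilon}\in G\}\ge\tfrac1p\,\varepsilon^{2}\log\P\{\varepsilon\boldsymbol{B}_{\boldsymbol{t}}\in G\}.
\]
As $\{\varepsilon\boldsymbol{B}_{\boldsymbol{t}}\}_{\varepsilon>0}$ is a scaled non-degenerate Gaussian vector, it satisfies the classical LDP on $\mathbb{R}^{n}$ with good rate function $I_{n}$ (the finite-dimensional Gaussian case of Cram\'er's theorem, cf.\ \cite{Deuschel2001}). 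Taking $\liminf$ and invoking the probabilistic lower bound gives $\liminf_{\varepsilon\to0}\varepsilon^{2}\log c_{p,r}\{T^{\varepsilon}\in G\}\ge-\tfrac1p\inf_{\boldsymbol{x}\in G}I_{n}(\boldsymbol{x})$, which is stronger than the bound required in Definition \ref{def:LDP}.

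The upper bound is the heart of the matter, and I would build it from a single capacitary half-space estimate. Fix $\boldsymbol{\mu}\neq0$ and $a>0$, and set $G=\langle\boldsymbol{\mu},\boldsymbol{B}_{\boldsymbol{t}}\rangle=[h]$, a centred Gaussian with variance $\sigma^{2}=\lVert h\rVert_{\mathscr{H}}^{2}=\boldsymbol{\mu}^{T}\boldsymbol{\Sigma}\boldsymbol{\mu}$. To estimate $c_{p,r}\{\langle\boldsymbol{\mu},\varepsilon\boldsymbol{B}_{\boldsymbol{t}}\rangle>a\}=c_{p,r}\{G>b\}$ with $b=a/\varepsilon$, I would construct an explicit test function $\varphi=\psi_{b}(G)$, where $\psi_{b}\colon\mathbb{R}\to[0,1]$ is smooth, equals $0$ below $b-w$ and $1$ above $b$, with $|\psi_{b}^{(l)}|\lesssim w^{-l}$ and transition width on the Gaussian tail scale $w\asymp\sigma^{2}/b$. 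Since $\varphi\ge1$ on $\{G>b\}$ and $\varphi\ge0$, the capacitary Chebyshev inequality applied to the quasi-continuous functional $\psi_{b}(G)$ gives $c_{p,r}\{G>b\}\lesssim\lVert\psi_{b}(G)\rVert_{\mathbb{D}_{r}^{p}}$, and because $D^{l}\psi_{b}(G)=\psi_{b}^{(l)}(G)\,h^{\otimes l}$ the norm collapses to one-dimensional Gaussian integrals,
\[
\lVert\psi_{b}(G)\rVert_{\mathbb{D}_{r}^{p}}^{p}=\mathbb{E}\big[|\psi_{b}(G)|^{p}\big]+\sum_{l=1}^{r}\sigma^{lp}\,\mathbb{E}\big[|\psi_{b}^{(l)}(G)|^{p}\big].
\]
A direct computation with the Gaussian density shows each term is of order $b^{O(1)}e^{-b^{2}/(2\sigma^{2})}$, so $\lVert\psi_{b}(G)\rVert_{\mathbb{D}_{r}^{p}}\asymp b^{\,r-1/p}e^{-b^{2}/(2p\sigma^{2})}$; the polynomial prefactor is annihilated by $\varepsilon^{2}\log(\cdot)$ and, with $b=a/\varepsilon$, one obtains the sharp constant
\[
\limsup_{\varepsilon\to0}\varepsilon^{2}\log c_{p,r}\{\langle\boldsymbol{\mu},\varepsilon\boldsymbol{B}_{\boldsymbol{t}}\rangle>a\}\le-\frac1p\cdot\frac{a^{2}}{2\sigma^{2}}=-\frac1p\inf\big\{I_{n}(\boldsymbol{x}):\langle\boldsymbol{\mu},\boldsymbol{x}\rangle\ge a\big\}.
\]

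Finally I would pass from half-spaces to an arbitrary closed $F$ by the standard finite-dimensional covering. Applying the half-space estimate to $\pm e_{i}$ first yields exponential tightness, $\lim_{R\to\infty}\limsup_{\varepsilon}\varepsilon^{2}\log c_{p,r}\{|\varepsilon\boldsymbol{B}_{\boldsymbol{t}}|>R\}=-\infty$, so it suffices to treat the compact set $F\cap\overline{B}_{R}$. For each $\boldsymbol{x}_{0}$ there the relaxed supporting half-space $H=\{\boldsymbol{x}:\langle\boldsymbol{\Sigma}^{-1}\boldsymbol{x}_{0},\boldsymbol{x}\rangle>\langle\boldsymbol{\Sigma}^{-1}\boldsymbol{x}_{0},\boldsymbol{x}_{0}\rangle-\delta\}$ is an open neighbourhood of $\boldsymbol{x}_{0}$ with $\inf_{H}I_{n}\to I_{n}(\boldsymbol{x}_{0})\ge\inf_{F}I_{n}$ as $\delta\to0$; a finite subcover together with the sub-additivity of $c_{p,r}$ and the half-space estimate gives $\limsup_{\varepsilon}\varepsilon^{2}\log c_{p,r}\{T^{\varepsilon}\in F\}\le-\tfrac1p\inf_{F}I_{n}$, completing the proof. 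The main obstacle is precisely this sharp half-space estimate: the crude route through hypercontractivity and Proposition \ref{prop:Lp to L2} (bounding $\lVert G^{2N}\rVert_{\mathbb{D}_{r}^{p}}$ by $\lVert G\rVert_{4N}^{2N}$ and optimising over $N$) only produces the non-optimal constant $\tfrac{1}{4(p-1)}$ in place of $\tfrac{1}{2p}$, so one genuinely needs the explicit cutoff $\psi_{b}$ with the correct width $w\asymp\sigma^{2}/b$ and a careful check that none of the $r$ Malliavin-derivative terms spoils the exponential rate $e^{-b^{2}/(2p\sigma^{2})}$.
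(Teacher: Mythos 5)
Your proposal is correct, and its overall architecture matches the paper's: the lower bound via $(\P(A))^{1/p}\leq c_{p,r}(A)$ and the classical Gaussian LDP, a sharp one-dimensional (half-space) capacity estimate as the crux of the upper bound, then a finite covering of a compact set plus exponential tightness to pass to general closed sets. The genuine difference lies in how the sharp half-space estimate is obtained. The paper uses the exponential test function $e^{\lambda\varepsilon B_{t}}$ (respectively $e^{\langle\boldsymbol{\lambda},\boldsymbol{B}_{\boldsymbol{t}}(\varepsilon\omega)\rangle_{\boldsymbol{\Sigma}}}$ in the vector case): its Malliavin derivatives are computed exactly as $D^{l}e^{\lambda G}=\lambda^{l}e^{\lambda G}h^{\otimes l}$, so $\lVert e^{\lambda G}\rVert_{\mathbb{D}_{r}^{p}}\leq e^{p\lambda^{2}\sigma^{2}/2}\sum_{l=0}^{r}(\lambda\sigma)^{l}$ follows from the exact Gaussian exponential moment, and optimising over $\lambda$ delivers the constant $\tfrac{1}{2p}$ with no tuning. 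You instead use a smooth cutoff $\psi_{b}(G)$ with transition width $w\asymp\sigma^{2}/b$; your verification that each of the $r+1$ terms in $\lVert\psi_{b}(G)\rVert_{\mathbb{D}_{r}^{p}}^{p}$ is $b^{O(1)}e^{-b^{2}/(2\sigma^{2})}$ is correct (the width is exactly at the scale where $e^{-(b-w)^{2}/(2\sigma^{2})}\asymp e^{-b^{2}/(2\sigma^{2})}$ while $w^{-lp}$ stays polynomial in $b$), and it yields the same sharp rate. The exponential test function is the cleaner of the two because the optimisation is a one-line quadratic minimisation and no width parameter needs calibrating; your cutoff has the cosmetic advantage of a bounded test function and makes explicit why the crude moment/hypercontractivity route (Proposition \ref{prop:Lp to L2} applied to $G^{2N}$) cannot give the constant $\tfrac{1}{2p}$ --- an observation the paper does not spell out. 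Your covering by relaxed supporting half-spaces of the ellipsoid is equivalent to the paper's covering by $\lvert\cdot\rvert_{\boldsymbol{\Sigma}}$-balls, each of which is itself estimated through a containing half-space. The only point to flag in both arguments is that the capacitary Chebyshev inequality is stated for (quasi-)lower semi-continuous functionals, so one must invoke the quasi-continuous redefinition of $B_{t}$ furnished by Theorem \ref{thm:Th4-2}; this affects your proof and the paper's equally.
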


\begin{proof}
According to Definition \ref{def:LDP}, we need to establish the upper
bound and the lower bound. Since $(p,r)$-capacity is increasing in $p$
and $r$, the lower bound part follows directly from the classical
LDPs for Gaussian measures, to conclude that for all open $G\subset\W$,
\begin{align*}
\limsup_{\varepsilon\to0}\varepsilon^{2}\log c_{p,r}\left\{ \omega\in\boldsymbol{W}:T^{\varepsilon}(\omega)\in G\right\}  & \geq\frac{1}{p}\limsup_{\varepsilon\to0}\varepsilon^{2}\log\mathbb{P}\left\{ \omega\in\boldsymbol{W}:T^{\varepsilon}(\omega)\in G\right\} \\
 & \geq-\frac{1}{p}\inf_{y\in G}I_{n}(y).
\end{align*}

For the upper bound part, we first establish the result when $n=1$.
Let $a>0$. By Chebyshev's inequality, for all $\lambda>0$,
\begin{align*}
c_{p,r}\left\{ \omega:B_{t}(\varepsilon\omega)>a\right\}  & =c_{p,r}\left\{ \omega:e^{\lambda\varepsilon B_{t}(\omega)}>e^{\lambda a}\right\} \vphantom{\sum_{a}^{b}}\\
 & \leq e^{-\lambda a}\left\Vert e^{\lambda\varepsilon B_{t}}\right\Vert _{\mathbb{D}_{r}^{p}}\vphantom{\sum_{a}^{b}}\\
 & =e^{-\lambda a}\left(\sum_{l=0}^{r}\mathbb{E}\left[\left\vert \left\Vert D^{l}\left(e^{\lambda\varepsilon B_{t}}\right)\right\Vert _{\mathscr{H}^{\otimes l}}\right\vert ^{p}\right]\right)^{\frac{1}{p}}.
\end{align*}
Recall that
\[
D\left(e^{\lambda\varepsilon B_{t}}\right)(s)=\lambda\varepsilon e^{\lambda\varepsilon B_{t}}K(t,s)\mathds{1}_{(0,t)}(s),
\]
so by iteration, 
\[
D^{l}\left(e^{\lambda\varepsilon B_{t}}\right)(s_{1},s_{2},\cdots,s_{l})=\left(\lambda\varepsilon\right)^{l}e^{\lambda\varepsilon B_{t}}\left(K(t)\mathds{1}_{(0,t)}\right)^{\otimes l}(s_{1},s_{2},\cdots,s_{l})
\]
for all $l\leq r$, where $\left(K(t)\mathds{1}_{(0,t)}\right)^{\otimes l}$
denotes the $l$-fold tensor product of $K(t,s)\mathds{1}_{(0,t)}(s)$
with itself. Therefore,
\begin{align*}
\left\Vert D^{l}\left(e^{\lambda\varepsilon B_{t}}\right)\right\Vert _{\mathscr{H}^{\otimes l}}^{2} & =\left(\lambda\varepsilon\right)^{2l}e^{2\lambda\varepsilon B_{t}}\left(\int_{0}^{1}K^{2}(t,s)\mathds{1}_{(0,t)}(s)ds\right)^{l}\\
 & =\left(\lambda\varepsilon\right)^{2l}e^{2\lambda\varepsilon B_{t}}t^{2Hl},
\end{align*}
and hence
\begin{align*}
\mathbb{E}\left[\left\vert \left\Vert D^{l}\left(e^{\lambda\varepsilon B_{t}}\right)\right\Vert _{\mathscr{H}^{\otimes l}}\right\vert ^{p}\right] & =\left(\lambda\varepsilon\right)^{lp}t^{Hlp}\mathbb{E}\left[e^{\lambda\varepsilon pB_{t}}\right]\\
 & =\left(\lambda\varepsilon\right)^{lp}t^{Hlp}e^{\frac{(\lambda\varepsilon p)^{2}t^{2H}}{2}}.
\end{align*}
It thus follows that
\begin{align*}
c_{p,r}\left\{ \omega:B_{t}(\varepsilon\omega)>a\right\}  & \leq e^{-\lambda a}\left(\sum_{l=0}^{r}\mathbb{E}\left[\left\vert \left\Vert D^{l}\left(e^{\lambda\varepsilon B_{t}}\right)\right\Vert _{\mathscr{H}^{\otimes l}}\right\vert ^{p}\right]\right)^{\frac{1}{p}}\\
 & \leq e^{-\lambda a}\sum_{l=0}^{r}\mathbb{E}\left[\left\vert \left\Vert D^{l}\left(e^{\lambda\varepsilon B_{t}}\right)\right\Vert _{\mathscr{H}^{\otimes l}}\right\vert ^{p}\right]^{\frac{1}{p}}\\
 & =e^{\frac{(\lambda\varepsilon)^{2}pt^{2H}}{2}-\lambda a}\sum_{l=0}^{r}\left(\lambda\varepsilon t^{H}\right)^{l},
\end{align*}
so that
\begin{equation}
\varepsilon^{2}\log c_{p,r}\left\{ \omega:B_{t}(\varepsilon\omega)>a\right\} \leq\frac{\lambda^{2}\varepsilon^{4}pt^{2H}}{2}-\lambda a\varepsilon^{2}+\varepsilon^{2}\log\left(\sum_{l=0}^{r}\left(\lambda\varepsilon t^{H}\right)^{l}\right).\label{eq:29}
\end{equation}
Setting $\lambda=\frac{a}{p\varepsilon^{2}t^{2H}}$ so that the sum
of first two terms in (\ref{eq:29}) attains its minimum, we obtain
that 
\begin{align*}
\varepsilon^{2}\log c_{p,r}\left\{ \omega:B_{t}(\varepsilon\omega)>a\right\}  & \leq-\frac{a^{2}}{2pt^{2H}}+\varepsilon^{2}\log\left((r+1)\max_{0\leq l\leq r}\left(\frac{a}{\varepsilon pt^{H}}\right)^{l}\right)\\
 & =-\frac{a^{2}}{2pt^{2H}}+\varepsilon^{2}\log(r+1)+\max_{0\leq l\leq r}l\varepsilon^{2}\log\left(\frac{a}{\varepsilon pt^{H}}\right).
\end{align*}
It follows that 
\begin{equation}
\limsup_{\varepsilon\to0}\varepsilon^{2}\log c_{p,r}\left\{ \omega:B_{t}(\varepsilon\omega)>a\right\} \leq-\frac{1}{2p}\cdot\frac{a^{2}}{t^{2H}}=-\frac{1}{p}\inf_{x>a}I_{1}(x),\label{eq:30}
\end{equation}
which remains true if we replace $\left\{ \omega:B_{t}(\varepsilon\omega)>a\right\} $
with $\left\{ \omega:B_{t}(\varepsilon\omega)\geq a\right\} $. We
may deduce the similar results for $\left\{ \omega:B_{t}(\varepsilon\omega)<b\right\} $
and $\left\{ \omega:B_{t}(\varepsilon\omega)\leq b\right\} $ with
$b<0$ by symmetry. 

Now deal the case of a finite partition $\boldsymbol{t}=\left\{ 0\leq t_{1}\leq\cdots\leq t_{n}\leq1\right\} $.
Then $\boldsymbol{B_{t}}(\varepsilon\omega)=\varepsilon\boldsymbol{B_{t}}(\omega)$.
Introduce an inner product $\langle\cdot,\cdot\rangle_{\boldsymbol{\Sigma}}$
on $\mathbb{R}^{n}$: 
\[
\langle\boldsymbol{x},\boldsymbol{y}\rangle_{\boldsymbol{\Sigma}}=\boldsymbol{x}^{T}\boldsymbol{\Sigma}^{-1}\boldsymbol{y},
\]
and denote the corresponding norm by $\lvert\cdot\rvert_{\boldsymbol{\Sigma}}$.
Notice that for any $\boldsymbol{x}=(x_{1},\cdots,x_{n})\in B(\boldsymbol{a},r)$,
the open ball in $\left(\mathbb{R}^{n},\lvert\cdot\rvert_{\boldsymbol{\Sigma}}\right)$
with centre $\boldsymbol{a}$ and radius $r$, 
\[
\langle\boldsymbol{\lambda},\boldsymbol{a}-\boldsymbol{x}\rangle_{\boldsymbol{\Sigma}}\leq\vert\boldsymbol{\lambda}\vert_{\Sgm}\vert\boldsymbol{a}-\boldsymbol{x}\vert_{\Sgm}\leq r\vert\boldsymbol{\lambda}\vert_{\Sgm}
\]
for all $\boldsymbol{\lambda}\in\mathbb{R}^{n}$, which implies that
$B(\boldsymbol{a},r)\subset\left\{ \boldsymbol{x}:\langle\boldsymbol{\lambda},\boldsymbol{a}-\boldsymbol{x}\rangle_{\Sgm}\leq r\vert\boldsymbol{\lambda}\vert_{\Sgm}\right\} $.
Based on this observation, we may apply Chebyshev's inequality and
get that
\begin{align}
c_{p,r}\left\{ \omega:\boldsymbol{B}_{\boldsymbol{t}}(\varepsilon\omega)\in B(\boldsymbol{a},r)\right\}  & \leq c_{p,r}\left\{ \omega:\langle\boldsymbol{\lambda},\boldsymbol{a}-\boldsymbol{B}_{\boldsymbol{t}}(\varepsilon\omega)\rangle_{\Sgm}\leq r\vert\boldsymbol{\lambda}\vert_{\Sgm}\right\} \nonumber \\
 & =c_{p,r}\left\{ \omega:e^{\langle\boldsymbol{\lambda},\boldsymbol{B}_{\boldsymbol{t}}(\varepsilon\omega)\rangle_{\Sgm}}\geq e^{\langle\boldsymbol{\lambda},\boldsymbol{a}\rangle_{\Sgm}-r\vert\boldsymbol{\lambda}\vert_{\Sgm}}\right\} \nonumber \\
 & \leq e^{r\vert\boldsymbol{\lambda}\vert_{\Sgm}-\langle\boldsymbol{\lambda},\boldsymbol{a}\rangle_{\Sgm}}\left\Vert e^{\langle\boldsymbol{\lambda},\boldsymbol{B}_{\boldsymbol{t}}(\varepsilon\omega)\rangle_{\Sgm}}\right\Vert _{\mathbb{D}_{r}^{p}}\label{eq:27}
\end{align}
for all $\boldsymbol{\lambda}=\left(\lambda_{1},\cdots,\lambda_{n}\right)\in\mathbb{R}^{n}$.
By the Chain rule for Malliavin derivatives, 
\[
D\left(e^{\langle\boldsymbol{\lambda},\boldsymbol{B}_{\boldsymbol{t}}(\varepsilon\omega)\rangle_{\Sgm}}\right)(s)=\varepsilon e^{\langle\boldsymbol{\lambda},\boldsymbol{B}_{\boldsymbol{t}}(\varepsilon\omega)\rangle_{\Sgm}}\langle\boldsymbol{\lambda},D\boldsymbol{B}_{\boldsymbol{t}}\rangle_{\Sgm}(s),
\]
where $D\boldsymbol{B}_{\boldsymbol{t}}=\left(DB_{t_{1}},\cdots,DB_{t_{n}}\right)$,
and by iteration, 
\[
D^{l}\left(e^{\langle\boldsymbol{\lambda},\boldsymbol{B}_{\boldsymbol{t}}(\varepsilon\omega)\rangle_{\Sgm}}\right)(s_{1},\cdots,s_{l})=\varepsilon^{l}e^{\langle\boldsymbol{\lambda},\boldsymbol{B}_{\boldsymbol{t}}(\varepsilon\omega)\rangle_{\Sgm}}\langle\boldsymbol{\lambda},D\boldsymbol{B}_{\boldsymbol{t}}\rangle_{\Sgm}^{\otimes l}(s_{1},\cdots,s_{l}),
\]
where $\langle\boldsymbol{\lambda},D\boldsymbol{B}_{\boldsymbol{t}}\rangle_{\Sgm}^{\otimes l}$
denotes the $l$-fold tensor product of $\langle\boldsymbol{\lambda},D\boldsymbol{B}_{\boldsymbol{t}}\rangle_{\Sgm}$
with itself. Since $\langle DB_{t_{i}},DB_{t_{j}}\rangle_{\mathscr{H}}=\sigma_{ij}$
for all $1\leq i,j\leq n$, it follows that
\begin{align*}
\left\Vert D^{l}\left(e^{\langle\boldsymbol{\lambda},\boldsymbol{B}_{\boldsymbol{t}}(\varepsilon\omega)\rangle_{\Sgm}}\right)\right\Vert _{\mathscr{H}^{\otimes l}}^{2} & =\varepsilon^{2l}e^{2\varepsilon\langle\boldsymbol{\lambda},\boldsymbol{B_{t}}(\omega)\rangle_{\Sgm}}\langle\langle\boldsymbol{\lambda},D\boldsymbol{B}_{\boldsymbol{t}}\rangle_{\Sgm},\langle\boldsymbol{\lambda},D\boldsymbol{B}_{\boldsymbol{t}}\rangle_{\Sgm}\rangle_{\mathscr{H}}^{l}\vphantom{\sum_{a}^{b}}\\
 & =\varepsilon^{2l}e^{2\varepsilon\langle\boldsymbol{\lambda},\boldsymbol{B_{t}}(\omega)\rangle_{\Sgm}}\left\langle \sum_{i=1}^{n}\left(\boldsymbol{\lambda}^{T}\Sgm^{-1}\right)_{i}DB_{t_{i}},\sum_{j=1}^{n}\left(\boldsymbol{\lambda}^{T}\Sgm^{-1}\right)_{j}DB_{t_{j}}\right\rangle _{\mathscr{H}}^{l}\\
 & =\varepsilon^{2l}e^{2\varepsilon\langle\boldsymbol{\lambda},\boldsymbol{B_{t}}(\omega)\rangle_{\Sgm}}\left(\sum_{1\leq i,j\leq n}\left(\boldsymbol{\lambda}^{T}\Sgm^{-1}\right)_{i}\left(\boldsymbol{\lambda}^{T}\Sgm^{-1}\right)_{j}\sigma_{ij}\right)^{l}\\
 & =\varepsilon^{2l}\vert\boldsymbol{\lambda}\vert_{\Sgm}^{2l}e^{2\varepsilon\langle\boldsymbol{\lambda},\boldsymbol{B_{t}}(\omega)\rangle_{\Sgm}},\vphantom{\sum_{a}^{b}}
\end{align*}
where $\left(\boldsymbol{\lambda}^{T}\Sgm^{-1}\right)_{i}$ denotes
the $i$-th component of $\boldsymbol{\lambda}^{T}\Sgm^{-1}$. Thus,
\begin{align*}
\mathbb{E}\left[\left\vert \left\Vert D^{l}\left(e^{\langle\boldsymbol{\lambda},\boldsymbol{B}_{\boldsymbol{t}}(\varepsilon\omega)\rangle_{\Sgm}}\right)\right\Vert _{\mathscr{H}^{\otimes l}}\right\vert ^{p}\right] & =\varepsilon^{lp}\vert\boldsymbol{\lambda}\vert_{\Sgm}^{lp}\mathbb{E}\left[e^{\varepsilon p\langle\boldsymbol{\lambda},\boldsymbol{B_{t}}(\omega)\rangle_{\Sgm}}\right]\\
 & =\varepsilon^{lp}\vert\boldsymbol{\lambda}\vert_{\Sgm}^{lp}e^{\frac{1}{2}(\varepsilon p)^{2}\left(\Sgm^{-1}\boldsymbol{\lambda}\right)^{T}\Sgm\left(\Sgm^{-1}\boldsymbol{\lambda}\right)}\\
 & =\varepsilon^{lp}\vert\boldsymbol{\lambda}\vert_{\Sgm}^{lp}e^{\frac{1}{2}(\varepsilon p)^{2}\vert\boldsymbol{\lambda}\vert_{\Sgm}^{2}},
\end{align*}
which implies that
\begin{align*}
\left\Vert e^{\langle\boldsymbol{\lambda},\boldsymbol{B}_{\boldsymbol{t}}(\varepsilon\omega)\rangle}\right\Vert _{\mathbb{D}_{r}^{p}} & \leq\sum_{l=0}^{r}\mathbb{E}\left[\left\vert \left\Vert D^{l}\left(e^{\langle\boldsymbol{\lambda},\boldsymbol{B}_{\boldsymbol{t}}(\varepsilon\omega)\rangle}\right)\right\Vert _{\mathscr{H}^{\otimes l}}\right\vert ^{p}\right]^{\frac{1}{p}}\\
 & =\sum_{l=0}^{r}\varepsilon^{l}\vert\boldsymbol{\lambda}\vert_{\Sgm}^{l}e^{\frac{1}{2}\varepsilon^{2}p\vert\boldsymbol{\lambda}\vert_{\Sgm}^{2}}.
\end{align*}
Therefore, by (\ref{eq:27}), 
\[
c_{p,r}\left\{ \omega:\boldsymbol{B}_{\boldsymbol{t}}(\varepsilon\omega)\in B(\boldsymbol{a},r)\right\} \leq\sum_{l=0}^{r}\left(\varepsilon\vert\boldsymbol{\lambda}\rvert_{\Sgm}\right)^{l}e^{\frac{1}{2}\varepsilon^{2}p\vert\boldsymbol{\lambda}\vert_{\Sgm}^{2}+r\vert\boldsymbol{\lambda}\vert_{\Sgm}-\langle\boldsymbol{\lambda},\boldsymbol{a}\rangle_{\Sgm}}.
\]
As a consequence, we have that
\begin{align}
\varepsilon^{2}\log c_{p,r}\left\{ \omega:\boldsymbol{B}_{\boldsymbol{t}}(\varepsilon\omega)\in B(\boldsymbol{a},r)\right\}  & \leq\frac{1}{2}\varepsilon^{4}p\vert\boldsymbol{\lambda}\vert_{\Sgm}^{2}+\varepsilon^{2}r\vert\boldsymbol{\lambda}\vert_{\Sgm}-\varepsilon^{2}\langle\boldsymbol{\lambda},\boldsymbol{a}\rangle_{\Sgm}\nonumber \\
 & \hphantom{=}+\varepsilon^{2}\log\left(\sum_{l=0}^{r}\left(\varepsilon\vert\boldsymbol{\lambda}\rvert_{\Sgm}\right)^{l}\right).\label{eq:28}
\end{align}
Choose $\boldsymbol{\lambda}$ such that  
\[
f(\boldsymbol{\lambda})=\frac{1}{2}\varepsilon^{4}p\vert\boldsymbol{\lambda}\vert_{\Sgm}^{2}+\varepsilon^{2}r\vert\boldsymbol{\lambda}\vert_{\Sgm}-\varepsilon^{2}\langle\boldsymbol{\lambda},\boldsymbol{a}\rangle_{\Sgm}
\]
attains its minimum, which happens when $\boldsymbol{\lambda}$ has
the same direction as $\boldsymbol{a}$ since the first two terms
only depends on the magnitude of $\boldsymbol{\lambda}$, i.e. we may
write $\boldsymbol{\lambda}=\boldsymbol{a}\vert\boldsymbol{\lambda}\vert_{\Sgm}\vert\boldsymbol{a}\vert_{\Sgm}^{-1}$.
Then the function becomes 
\[
f(\boldsymbol{\lambda})=\frac{1}{2}\varepsilon^{4}p\vert\boldsymbol{\lambda}\vert_{\Sgm}^{2}+\varepsilon^{2}r\vert\boldsymbol{\lambda}\vert_{\Sgm}-\varepsilon^{2}\vert\boldsymbol{a}\vert_{\Sgm}\vert\boldsymbol{\lambda}\vert_{\Sgm},
\]
which is a quadratic function of $\vert\boldsymbol{\lambda}\vert_{\Sgm}$,
we thus deduce that it reaches its minimum when 
\[
\vert\boldsymbol{\lambda}\vert_{\Sgm}=\frac{\left(\vert\boldsymbol{a}\vert_{\Sgm}-r\right)^{+}}{\varepsilon^{2}p}.
\]
Therefore, the minimum is attained at
\[
\boldsymbol{\lambda}=\frac{\left(\vert\boldsymbol{a}\vert_{\Sgm}-r\right)^{+}}{\varepsilon^{2}p\vert\boldsymbol{a}\vert_{\Sgm}}\boldsymbol{a}.
\]
By setting $\boldsymbol{\lambda}$ equal to the above value in (\ref{eq:28}),
we obtain that 
\begin{align*}
\varepsilon^{2}\log c_{p,r}\left\{ \omega:\boldsymbol{B}_{\boldsymbol{t}}(\varepsilon\omega)\in B(\boldsymbol{a},r)\right\}  & \leq-\frac{1}{2p}\left(\left(\vert\boldsymbol{a}\vert_{\Sgm}-r\right)^{+}\right)^{2}+\varepsilon^{2}\log\left(\sum_{l=0}^{r}\left(\frac{\left(\vert\boldsymbol{a}\vert_{\Sgm}-r\right)^{+}}{\varepsilon p}\right)^{l}\right)\\
\vphantom{\left(\frac{\left(\vert\boldsymbol{a}\vert_{\Sgm}-r\right)^{+}}{\varepsilon p}\right)} & \leq-\frac{1}{2p}\left(\left(\vert\boldsymbol{a}\vert_{\Sgm}-r\right)^{+}\right)^{2}+\varepsilon^{2}\log\left(r+1\right)\\
 & \hphantom{=}+\max_{0\leq l\leq r}\varepsilon^{2}l\log\left(\frac{\left(\vert\boldsymbol{a}\vert_{\Sgm}-r\right)^{+}}{\varepsilon p}\right),
\end{align*}
which implies that 
\begin{align*}
\limsup_{\varepsilon\to0}\varepsilon^{2}\log c_{p,r}\left\{ \omega:\boldsymbol{B}_{\boldsymbol{t}}(\varepsilon\omega)\in B(\boldsymbol{a},r)\right\}  & \leq-\frac{1}{2p}\left(\left(\vert\boldsymbol{a}\vert_{\Sgm}-r\right)^{+}\right)^{2}\\
 & =-\frac{1}{p}\inf_{\boldsymbol{x}\in B(\boldsymbol{a},r)}I_{n}(\boldsymbol{x}).
\end{align*}

Now for any compact $K\subset(\mathbb{R}^{n},\lvert\cdot\rvert_{\Sgm})$
and any $\delta>0$, there exists a finite open cover $\{B(\boldsymbol{a}_{i},\delta)\}_{i\in I}$
in $(\mathbb{R}^{n},\lvert\cdot\rvert_{\Sgm})$ of $K$ with $\boldsymbol{a}_{i}\in K$
and $I$ a finite index set. Therefore, 
\begin{align*}
 & \hphantom{=\ \ }\limsup_{\varepsilon\to0}\varepsilon^{2}\log c_{p,r}\left\{ \omega:\boldsymbol{B}_{\boldsymbol{t}}(\varepsilon\omega)\in K\right\} \\
 & \leq\limsup_{\varepsilon\to0}\varepsilon^{2}\log c_{p,r}\left\{ \omega:\boldsymbol{B}_{\boldsymbol{t}}(\varepsilon\omega)\in\bigcup_{i\in I}B(\boldsymbol{a}_{i},\delta)\right\} \\
 & \leq\limsup_{\varepsilon\to0}\varepsilon^{2}\log\left(\sum_{i\in I}c_{p,r}\left\{ \omega:\boldsymbol{B}_{\boldsymbol{t}}(\varepsilon\omega)\in B(\boldsymbol{a}_{i},\delta)\right\} \right)\\
\vphantom{\sum_{i\in I}} & \leq\limsup_{\varepsilon\to0}\varepsilon^{2}\log\vert I\vert+\limsup_{\varepsilon\to0}\varepsilon^{2}\log\left(\max_{i\in I}c_{p,r}\left\{ \omega:\boldsymbol{B}_{\boldsymbol{t}}(\varepsilon\omega)\in B(\boldsymbol{a}_{i},\delta)\right\} \right)\\
\vphantom{\sum_{i\in I}} & =\max_{i\in I}\limsup_{\varepsilon\to0}\varepsilon^{2}\log c_{p,r}\left\{ \omega:\boldsymbol{B}_{\boldsymbol{t}}(\varepsilon\omega)\in B(\boldsymbol{a}_{i},\delta)\right\} \\
 & \leq\max_{i\in I}-\frac{1}{p}\inf_{\boldsymbol{x}\in B(\boldsymbol{a}_{i},\delta)}I_{n}(\boldsymbol{x})\\
 & =-\frac{1}{p}\min_{i\in I}\inf_{\boldsymbol{x}\in B(\boldsymbol{a}_{i},\delta)}I_{n}(\boldsymbol{x})\\
 & \leq-\frac{1}{p}\inf_{\boldsymbol{x}\in B(K,\delta)}I_{n}(\boldsymbol{x}),
\end{align*}
where $B(K,\delta)=\left\{ \boldsymbol{x}\in\mathbb{R}^{n}:\inf_{y\in K}\lvert\boldsymbol{x}-\boldsymbol{y}\rvert_{\Sgm}<\delta\right\} $.
Let $\delta\to0$, then the upper bound is established for all compact sets.

Now for any $F\subset\mathbb{R}^{n}$ closed under Euclidean metric,
as all norms on $\mathbb{R}^{n}$ are equivalent, $F$ is also closed
in $(\mathbb{R}^{n},\lvert\cdot\rvert_{\Sgm})$. For $\rho>0$, let
$H_{\rho}=\left\{ \boldsymbol{x}=(x_{1},\cdots,x_{n}):\vert x_{i}\vert\leq\rho,\forall1\leq i\leq n\right\} $
be a hypercube in $\mathbb{R}^{n}$. Then by sub-additivity property,
\begin{align*}
c_{p,r}\left\{ \omega:\boldsymbol{B}_{\boldsymbol{t}}(\varepsilon\omega)\in F\right\}  & \leq c_{p,r}\left\{ \omega:\boldsymbol{B}_{\boldsymbol{t}}(\varepsilon\omega)\in F\cap H_{\rho}\right\} +c_{p,r}\left\{ \omega:\boldsymbol{B}_{\boldsymbol{t}}(\varepsilon\omega)\in H_{\rho}^{C}\right\} \\
 & \leq c_{p,r}\left\{ \omega:\boldsymbol{B}_{\boldsymbol{t}}(\varepsilon\omega)\in F\cap H_{\rho}\right\} +\sum_{i=1}^{n}c_{p,r}\left\{ \omega:\left\vert B_{t_{i}}(\varepsilon\omega)\right\vert >\rho\right\} .
\end{align*}
Therefore, by the result for compact sets and (\ref{eq:30}), as well
as Lemma 1.2.15 on page 7 in \cite{Dembo2009}, we have that
\begin{align*}
 & \hphantom{=\ \ }\limsup_{\varepsilon\to0}\varepsilon^{2}\log c_{p,r}\left\{ \omega:\boldsymbol{B}_{\boldsymbol{t}}(\varepsilon\omega)\in F\right\} \\
 & \leq\max\left\{ \limsup_{\varepsilon\to0}\varepsilon^{2}\log c_{p,r}\left\{ \omega:\boldsymbol{B}_{\boldsymbol{t}}(\varepsilon\omega)\in F\cap H_{\rho}\right\} ,\right.\\
 & \hphantom{=aaa}\left.\limsup_{\varepsilon\to0}\varepsilon^{2}\log\left(\sum_{i=1}^{n}c_{p,r}\left\{ \omega:\left\vert B_{t_{i}}(\varepsilon\omega)\right\vert >\rho\right\} \right)\right\} \\
 & \leq\max\left\{ \limsup_{\varepsilon\to0}\varepsilon^{2}\log c_{p,r}\left\{ \omega:\boldsymbol{B}_{\boldsymbol{t}}(\varepsilon\omega)\in F\cap H_{\rho}\right\} ,\right.\\
 & \hphantom{=aaa}\left.\limsup_{\varepsilon\to0}\varepsilon^{2}\log\left(c_{p,r}\left\{ \omega:\left\vert B_{t_{i}}(\varepsilon\omega)\right\vert >\rho\right\} \right)\right\} \\
 & \leq\max\left\{ -\frac{1}{p}\inf_{\boldsymbol{x}\in F\cap H_{\rho}}I_{n}(\boldsymbol{x}),-\frac{1}{p}\inf_{x>\rho}I_{1}(x)\right\} 
\end{align*}
for all $\rho>0$. The proof is complete by letting $\rho\to\infty$.
\end{proof}
Now we may conclude our proof of the quasi-sure large deviation principle, the second part of Theorem \ref{thm:main LDP}. 
\begin{proof}[Proof of Theorem \ref{thm:main LDP}, (2)]
Since $F_{m}:\mathbb{R}^{2^{m}+1}\to\boldsymbol{W}$ defined in (\ref{eq:31})
is continuous and $F_{m}\circ T^{\varepsilon}=X^{\varepsilon,(m)}$,
by the contraction principle (Theorem \ref{thm:Contraction-Principle}),
the family $\left\{ X^{\varepsilon,(m)}\right\} $ satisfies the $c_{p,r}$-LDP
with the good rate function 
\[
J_{m}(\omega)=\inf_{\boldsymbol{x}:F_{m}(\boldsymbol{x})=\omega}I_{2^{m}+1}(\boldsymbol{x}),\quad\omega\in\boldsymbol{W},
\]
where we define $\inf\emptyset=\infty$. When $p=1$ and $r=0$, the
capacity $c_{p,r}$ coincides with Wiener measure $\P$, and we would
expect that the classical LDP for fBM defined on the classical Wiener
space holds. 

Now define $\hat{F}_{m}:\W\to\W$ by 
\begin{equation}
\hat{F}_{m}(\omega)=\omega\left(\frac{k-1}{2^{m}}\right)+2^{m}\left(t-\frac{k-1}{2^{m}}\right)\left(\omega\left(\frac{k}{2^{m}}\right)-\omega\left(\frac{k-1}{2^{m}}\right)\right),\quad t\in\left[\frac{k-1}{2^{m}},\frac{k}{2^{m}}\right],\label{eq:37}
\end{equation}
which is a continuous mapping with respect to the uniform convergence
topology. Then by the classical contraction principle for measures
(see Theorem 4.2.1 on page 126 in \cite{Dembo2009}), the family
$\left\{ \Q_{\varepsilon}\circ\hat{F}_{m}^{-1}\right\} $ on $(\W,\mathscr{B}(\W))$
satisfies the LDP with the good rate function
\begin{equation}
\hat{J}_{m}(\omega)=\inf\left\{ I(x):x\in\W,\hat{F}_{m}(x)=\omega\right\} ,\quad\forall\omega\in\W,\label{eq:36}
\end{equation}
where $\inf\emptyset=\infty$. By (\ref{32}), for each $A\in\mathscr{B}(\W)$,
$\hat{F}_{m}^{-1}(A)\in\mathscr{B}(\W)$, and
\begin{align*}
\Q_{\varepsilon}\circ\hat{F}_{m}^{-1}(A) & =\Q_{\varepsilon}\left(\hat{F}_{m}^{-1}(A)\right)\\
 & =\P\left\{ \omega\in\W:\varepsilon B(\omega)\in\hat{F}_{m}^{-1}(A)\right\} \\
 & =\P\left\{ \omega\in\W:\hat{F}_{m}\left(\varepsilon B(\omega)\right)\in A\right\} .
\end{align*}
Since $\hat{F}_{m}\left(\varepsilon B\right)=X^{\varepsilon,(m)}$
$\P$-a.s. on $\W$, we obtain that 
\[
\Q_{\varepsilon}\circ\hat{F}_{m}^{-1}(A)=\P\left\{ \omega\in\W:X^{\varepsilon,(m)}(\omega)\in A\right\} .
\]
Therefore, by the uniqueness of rate functions (see Lemma 4.1.4, Section
4.1.1 in \cite{Dembo2009}), $J_{m}$ coincides with $\hat{J}_{m}$. 

As shown in Theorem \ref{thm:Exp good X}, $\left\{ X^{\varepsilon,(m)}:\varepsilon>0\right\} $
are exponentially good approximations of $\left\{ X^{\varepsilon}:\varepsilon>0\right\} $,
so it suffices to verify that the function $I$ defined above coincides
with the function $J$ given in (\ref{eq:34}) and satisfies all conditions
in Proposition \ref{prop:Exp-good-approx}. Let us first check if
$I$ satisfies all conditions. We observe that $I$ given in (\ref{eq:33})
is a good rate function by definition. 

For any closed $C\subset\W$, denote $\eta_{m}=\inf_{\omega\in C}\hat{J}_{m}(\omega)$,
where $\hat{J}_{m}=J_{m}$ is defined as in (\ref{eq:36}). By definition,
$\eta_{m}=\inf_{\omega\in\hat{F}_{m}^{-1}(C)}I(\omega)$. Suppose
that $\liminf_{m\to\infty}\eta_{m}=\eta<\infty$, then as $I$ is
a good rate function and lower semi-continuous functions attain their
minimums on compact sets, we conclude that $I$ attains its minimum
on the closed subset $\hat{F}_{m}^{-1}(C)\in\W$. Therefore, for each
$m$ , there exists some $\omega_{m}\in\W$ such that $\omega_{m}\in\hat{F}_{m}^{-1}(C)$
and $\eta_{m}=\inf_{\omega\in\hat{F}_{m}^{-1}(C)}I(\omega)=I(\omega_{m})$. 

We notice that for all $\omega\in\W$, $\hat{F}_{m}(\omega)\to\omega$
in $\left(W,\lVert\cdot\rVert\right)$ as $m\to\infty$. Since $\hat{F}_{m}\left(\omega_{m}\right)\in C$
for all $m$, for each $\delta>0$, $\omega_{m}\in C_{\delta}$ for
large $m$, where $C_{\delta}=\left\{ \omega:\lVert\omega-C\rVert\leq\delta\right\} $.
It follows that 
\[
\inf_{\omega\in C_{\delta}}I(\omega)\leq I(\omega_{m})=\eta_{m}=\inf_{\omega\in C}\hat{J}_{m}(\omega)
\]
for $m$ sufficiently large, and hence by taking limit infimum on
both sides, we deduce that 
\[
\inf_{\omega\in C_{\delta}}I(\omega)\leq\liminf_{m\to\infty}\inf_{\omega\in C}\hat{J}_{m}(\omega).
\]
According to Lemma 4.1.6 (a), Section 4.1.1 in \cite{Dembo2009},
\begin{equation}
\inf_{\omega\in C}I(\omega)\leq\liminf_{m\to\infty}\inf_{\omega\in C}\hat{J}_{m}(\omega)\label{eq:38}
\end{equation}
when letting $\delta\to0$, and hence the condition (\ref{eq:35})
is fulfilled. The case when $\liminf_{m\to\infty}\eta_{m}=\infty$
is trivial, so we have verified all conditions in Proposition \ref{prop:Exp-good-approx}.

Next, we prove that $I$ coincides with the function $J$ defined
as in (\ref{eq:34}) by
\[
J(\omega)=\sup_{\lambda>0}\liminf_{m\to\infty}\inf_{x\in B(\omega,\lambda)}\hat{J}_{m}(x).
\]
For any $\omega\in\W$, set $C=\overline{B(\omega,\lambda)}$ in (\ref{eq:38}).
It holds that
\[
\inf_{x\in\overline{B(\omega,\lambda)}}I(x)\leq\liminf_{m\to\infty}\inf_{x\in\overline{B(\omega,\lambda)}}\hat{J}_{m}(x)\leq\liminf_{m\to\infty}\inf_{x\in B(\omega,\lambda)}\hat{J}_{m}(x)\leq J(\omega).
\]
By letting $\lambda\to0$ and applying Lemma 4.1.6 (a), Section 4.1.1
in \cite{Dembo2009}, we conclude that $I(\omega)\leq J(\omega)$
for all $\omega$. For the reverse part, denote $\hat{\omega}_{m}=\hat{F}_{m}(\omega)$
for $\omega\in\W$, and as shown above, $\hat{\omega}_{m}\to\omega$
as $m\to\infty$. Therefore, for any $\lambda>0$, there exists some
$M>0$ such that for all $m\geq M$,
\[
\inf_{x\in B(\omega,\lambda)}\hat{J}_{m}(x)\leq\hat{J}_{m}(\hat{\omega}_{m}).
\]
By the definition of $\hat{J}_{m}$, $\hat{J}_{m}(\hat{\omega}_{m})\leq I(\omega)$.
By taking limit infimum over $m$ first, then supremum over $\lambda$,
we obtain that 
\[
J(\omega)=\sup_{\lambda>0}\liminf_{m\to\infty}\inf_{x\in B(\omega,\lambda)}\hat{J}_{m}(x)\leq I(\omega),
\]
and hence $I=J$.
\end{proof}

\end{document}